\newcommand{\inlineitem}[1][]{%
\ifnum\enit@type=\tw@
    {\descriptionlabel{#1}}
  \hspace{\labelsep}%
\else
  \ifnum\enit@type=\z@
       \refstepcounter{\@listctr}\fi
    \quad\@itemlabel\hspace{\labelsep}%
\fi} \makeatother
\newcommand{\ga}{\alpha}
\newcommand{\gb}{\beta}
\newcommand{\gga}{\gamma}
\newcommand{\gd}{\delta}
\newcommand{\gl}{\lambda}
\newcommand{\gp}{\pi}
\newcommand{\gs}{\sigma}
\newcommand{\Gs}{\Sigma}
\newcommand{\Gom}{\Omega}
\newcommand{\sbnq}{\subsetneq}
\newcommand{\bs}{\backslash}
\newcommand{\nin}{\notin}
\newcommand{\mbb}{\mathbb}
\newcommand{\mcl}{\mathcal}
\newcommand{\ol}{\overline}
\newcommand{\us}{\underset}
\newcommand{\os}{\overset}
\newcommand{\lla}{\longleftarrow}
\newcommand{\lra}{\longrightarrow}
\newcommand{\N}{\mbb N}
\newcommand{\Z}{\mbb Z}
\newcommand{\ra}{\rightarrow}
\newcommand{\Ra}{\Rightarrow}
\newcommand{\es}{\emptyset}
\newcommand{\equ}[1]{%
\begin{equation*}
#1
\end{equation*}
}
\newcommand{\equa}[1]{%
\begin{equation*}
\begin{aligned}
#1
\end{aligned}
\end{equation*}
}
\DeclareMathOperator{\Det}{Det}
\newcommand{\mattwo}[4]{%
\begin{pmatrix}
  #1 & #2\\ #3 & #4
\end{pmatrix}
}
\newcommand{\matcoltwo}[2]{%
\begin{pmatrix}
  #1\\#2
\end{pmatrix}
}
\theoremstyle{plain}
\newtheorem{theorem}{Theorem}[section]
\newtheorem{prop}[theorem]{Proposition}
\newtheorem{cor}[theorem]{Corollary}
\def\namedlabel#1#2{\begingroup
   \def\@currentlabel{#2}%
   \label{#1}\endgroup
}
\newtheorem*{thmOmega}{\bf{Theorem} $\bm{\Gom}$}
\newtheorem*{thmSigma}{\bf{Theorem} $\bm{\Gs}$}
\theoremstyle{definition}
\newtheorem{defn}[theorem]{Definition}
\theoremstyle{remark}
\newtheorem{remark}[theorem]{Remark}
\newtheorem{example}[theorem]{Example}
\numberwithin{equation}{section}
\begin{document}
\title[On a Projective Space Invariant of a Co-torsion Module of Rank Two]{On a Projective Space Invariant of a Co-torsion Module of Rank Two over a Dedekind Domain}
\author{C P Anil Kumar}
\address{School of Mathematics, Harish-Chandra Research Institute, HBNI, Chhatnag Road, Jhunsi, Prayagraj (Allahabad), 211 019,  India. \,\, email: {\tt akcp1728@gmail.com}}
\subjclass[2010]{13F05, 11R42}
\keywords{Dedekind Domains, Modules over Dedekind Domains, Projective Spaces Associated to Ideals, Zeta Functions}
\thanks{This work is done while the author is a Post Doctoral Fellow at Harish-Chandra Research Institute, Prayagraj(Allahabad).}
\date{\sc \today}
\begin{abstract}
For a Dedekind domain $\mcl{O}$ and a rank two co-torsion module $M\subseteq \mcl{O}^2$ with invariant factor ideals $\mcl{L}\supseteq \mcl{K}$ in $\mcl{O}$,  that is, $\frac{\mcl{O}^2}{M}\cong \frac{\mcl{O}}{\mcl{L}}\oplus \frac{\mcl{O}}{\mcl{K}}$, we associate a new projective space invariant element in $\mbb{PF}^1_{\mcl{I}}$ where $\mcl{I}$ is given by the ideal factorization $\mcl{K}=\mcl{L}\mcl{I}$ in $\mcl{O}$. This invariant element along with the invariant factor ideals determine the module $M$ completely as a subset of $\mcl{O}^2$. As a consequence, projective spaces associated to ideals in $\mcl{O}$ can be used to enumerate such modules. We compute the zeta function associated to such modules in terms of the zeta function of the one dimensional projective spaces for the ring $\mcl{O}_K$ of integers in a number field $K/\mbb{Q}$ and relate them to Dedekind zeta function. Using the projective spaces as parameter spaces, we re-interpret the Chinese remainder reduction isomorphism $\mbb{PF}^1_{\mcl{I}} \ra \us{i=1}{\os{l}{\prod}}\mbb{PF}^1_{\mcl{I}_i}$ associated to a factorization of an ideal $\mcl{I}=\us{i=1}{\os{l}{\prod}}\mcl{I}_i$ into mutually co-maximal ideals $\mcl{I}_i,1\leq i\leq l$ in terms of the intersection of associated modules arising from the projective space elements.
\end{abstract}
\maketitle
\section{\bf{Introduction}}
Projective spaces associated to ideals in commutative ring with unity are of immense interest in number theory and geometry. We explore here one such interest. 
For two fixed ideals $\mcl{L}\supseteq \mcl{K}$ in a Dedekind domain $\mcl{O}$, the one dimensional projective space associated to the ideal $\mcl{I}$ where $\mcl{K}=\mcl{L}\mcl{I}$ forms a parameterizing space for co-torsion modules (of rank two) in $\mcl{O}^2$. This is the content of the two main Theorems~[\ref{theorem:ProjectiveInvariant},\ref{theorem:ProjectiveInvariantSurj}] of this article. As a consequence, they can be used for enumeration purposes if the projective spaces are finite sets (refer to Theorems~[\ref{theorem:Bijection},\ref{theorem:ZetaFunction}]). In Theorem~\ref{theorem:ZetaFunction} we relate the three zeta functions $\zeta_{\mcl{O}_K},\zeta_{\mcl{O}_K^2},\zeta^{\mcl{O}_K}_{\mbb{PF}^1}$ as $\zeta_{\mcl{O}_K^2}(s)=\zeta_{\mcl{O}_K}(2s)\zeta^{\mcl{O}_K}_{\mbb{PF}^1}(s)$. As an example, the cardinality of finite index subgroups of $\Z^2$ (refer to Theorem~\ref{theorem:NumberofSubgroupsofFiniteIndex}) and its zeta functions can be calculated (refer to Corollary~\ref{cor:ZetaFunctions}) in terms of the zeta function of the one dimensional projective spaces over integers. By exploring the properties of these parameterizing spaces $\mbb{PF}^1_{\mcl{I}},\mcl{I}\subseteq \mcl{O}$, we can also study the corresponding modules. For example we can re-interpret the Chinese remainder reduction map associated to a finite product of mutually co-maximal ideals, in terms of the intersection of such modules as given in Theorem~\ref{theorem:IntersectionModules}. For the theory of Dedekind domains and modules over Dedekind domains refer to N.~Bourbaki~\cite{MR1727221}, chapter VII, sections $2 \& 4$. 
\section{\bf{Statement of the Main Theorems}}
In this section we give the required definitions in order to state the main theorems of the article.
\begin{defn}
Let $\mcl{O}$ be a domain. A torsion free $\mcl{O}$-module $M$ is said to be of rank $n\in \N$, if the cardinality of a maximal set in $M$ consisting $\mcl{O}$-linearly independent elements is $n$. Note that such a set always gives rise to a basis for the $S^{-1}\mcl{O}$-vector space $S^{-1}M$ where $S=\mcl{O}\bs \{0\}$.
\end{defn}
\begin{example}
With this definition the module $\mbb{Q}$ is a rank one $\Z$-module. However $\mbb{Q}$ is not a finitely generated $\Z$-module.
\end{example}
\begin{defn}[Co-torsion Module]
Let $\mcl{O}$ be a Dedekind domain. An $\mcl{O}$-submodule $M\subseteq \mcl{O}^n$ is said to be co-torsion if $\frac{\mcl{O}^n}{M}$ is torsion. Note that the ambient module $\mcl{O}^n$ is also important in this definition. In   N.~Bourbaki~\cite{MR1727221}, according to Definition 1, Page 512, chapter VII, such a module $M$ becomes a lattice of $(S^{-1}\mcl{O})^n$ with respect to $\mcl{O}$ where $S=\mcl{O}\bs \{0\}$.
\end{defn}
\begin{remark}
For $n=2$, a co-torsion $\mcl{O}$-module $M\subseteq \mcl{O}^2$ must necessarily have rank two. Conversely a rank two torsion-free $\mcl{O}$-module $M \subseteq \mcl{O}^2$ must be a co-torsion module. However a rank two torsion-free $\mcl{O}$-module need not be embeddable in $\mcl{O}^2$. An example is the rank two $\Z$-module $\mbb{Q}\oplus \mbb{Q}$ which is not embeddable in $\Z \oplus \Z$. A finitely generated rank two torsion-free $\mcl{O}$-module $M$ is embeddable in $\mcl{O}^2$. In fact such a module $M\cong \mcl{O}\oplus \mcl{I}$ for some non-zero ideal $\mcl{I}\subseteq \mcl{O}$.
\end{remark}
%\begin{defn}[Height ideal of a Co-torsion Module]
%Let $\mcl{O}$ be a Dedekind domain and let $\mcl{O}$-submodule $M\subseteq \mcl{O}^2$ be a co-torsion module. %The height ideal of $M$ is defined as \equ{\mcl{H}_M=\bigcap\big\{\mcl{I}\mid \mcl{I}\subs \mcl{O}\text{ is an %ideal and }\mcl{I}\mcl{O}^2=\mcl{I}\oplus \mcl{I}\supseteq M\big\}.} 	
%It is an ideal in $\mcl{O}$ and $\mcl{H}_M\oplus \mcl{H}_M \supseteq M$.
%\end{defn}
\begin{defn}[Projective Space Associated to an Ideal]
Let $\mcl{O}$ be a Dedekind domain and $\mcl{I}\subseteq \mcl{O}$ be an ideal. Let $(\mcl{O}^2)^{*}=\{(a,b)\in \mcl{O}^2\mid \langle a\rangle +\langle b\rangle = \mcl{O}\}$. Define an equivalence relation $(\mcl{O}^2)^{*}$ as follows. We say $(a,b)\sim (c,d)$ if $ad-bc\in \mcl{I}$. We define the one dimensional projective space $\mbb{PF}^1_{\mcl{I}}$ associated to the ideal $\mcl{I}$ to be the set of equivalence classes namely $\mbb{PF}^2_{\mcl{I}}=\frac{(\mcl{O}^2)^{*}}{\sim}$. The equivalence of the element $(a,b)\in (\mcl{O}^2)^{*}$ is denoted by $[a:b]$.
\end{defn}
\begin{remark}
Let $\mcl{O}$ be a Dedekind domain and $(0)\neq \mcl{I}\subseteq \mcl{O}$ be a non-zero ideal. Then $(a,b)\sim (c,d)$ if and only if there exists an element $\gl\in \mcl{O}$ such that $\ol{\gl} \in \mcl{U}\big(\frac{\mcl{O}}{\mcl{I}}\big)$ the unit group of the ring $\frac{\mcl{O}}{\mcl{I}}$ and $c=\gl a,d=\gl b$.	
\end{remark}
\begin{defn}[Invariants Associated to a Co-torsion Module]
\label{defn:Invariants}
Let $\mcl{O}$ be a dede-kind domain and let $\mcl{O}$-submodule $M\subsetneq \mcl{O}^2$ be a co-torsion module. 
We associate the following invariants for the co-torsion module $M$.
\begin{enumerate}
\item Elementary Maximal Ideal Divisor Invariants: There exists finitely many maximal ideals $\mcl{M}_i\sbnq \mcl{O}$ and integers $k_i,l_i,1\leq i\leq r$ such that $0\leq l_i\leq k_i\neq 0$ and $\frac{\mcl{O}^2}{M}\cong \us{i=1}{\os{r}{\bigoplus}}\bigg(\frac{\mcl{O}}{\mcl{M}_i^{l_i}}\oplus \frac{\mcl{O}}{\mcl{M}_i^{k_i}}\bigg)$. 
\item Invariant Factor Ideals: These ideals are defined as $\mcl{L}=\mcl{M}_1^{l_1}\mcl{M}_2^{l_2}\ldots \mcl{M}_r^{l_r}$, $\mcl{K}=\mcl{M}_1^{k_1}\mcl{M}_2^{k_2}\ldots \mcl{M}_r^{k_r}$ with $\mcl{L}\supseteq \mcl{K}$ and $\frac{\mcl{O}^2}{M}\cong \frac{\mcl{O}}{\mcl{L}}\oplus \frac{\mcl{O}}{\mcl{K}}$.
\item Projective Space Invariant: For the $\mcl{O}$-module $M$ define another invariant ideal $\mcl{I}=\mcl{M}_1^{k_1-l_1}\mcl{M}_2^{k_2-l_2}\ldots$ $ \mcl{M}_r^{k_r-l_r}$ with the convention that $\mcl{I}=\mcl{O}$ if $k_i=l_i$ for all $1\leq i\leq r$.	The projective space invariant of the module $M$ is the space $\mbb{PF}^1_{\mcl{I}}$ where $\mcl{I}$ is the projective space invariant ideal of $M$. Here $\mcl{K}=\mcl{L}\mcl{I}$ as a product of ideals in $\mcl{O}$.
\item Projective Space Invariant Element: We will show later in Proposition~\ref{prop:ProjSpaceElement} that $\mcl{L}\oplus \mcl{L} \supseteq M$. Now we associate for $(ta,tb)\in M$ with $t\in \mcl{L}\bs \big(\us{i=1}{\os{r}{\cup}}\mcl{L}\mcl{M}_i\big),a,b\in \mcl{O}$ and $\langle a\rangle +\langle b\rangle =\mcl{O}$, the invariant element $[a:b]\in \mbb{PF}^1_{\mcl{I}}$. We will show later again in Proposition~\ref{prop:ProjSpaceElement} that there exists such an element $(ta,tb)\in M$ and the element $[a:b]\in \mbb{PF}^1_{\mcl{I}}$ is uniquely determined.
\end{enumerate}
\end{defn}
Now we state the first and second main theorems of the article. 
\begin{thmOmega}
\namedlabel{theorem:ProjectiveInvariant}{$\Gom$}
Let $\mcl{O}$ be a Dedekind domain and let $\mcl{O}$-submodule $M\subsetneq \mcl{O}^2$ be a co-torsion module. With the notations as in Definition~\ref{defn:Invariants} the invariant ideals $\mcl{L}\supseteq \mcl{K}$ and the invariant element $[a:b]\in  \mbb{PF}^1_{\mcl{I}}$ where $\mcl{K}=\mcl{L}\mcl{I}$ completely determine the $\mcl{O}$-submodule $M$ as a subset of $\mcl{O}^2$. Stating in other words, if $M_i\subsetneq \mcl{O}^2$ be two such modules with invariants $\mcl{L}_i\supseteq \mcl{K}_i$ and $[a_i,b_i]\in \mbb{PF}^1_{\mcl{I}_i}$ where $\mcl{K}_i=\mcl{L}_i\mcl{I}_i,i=1,2$ then we have 
\equa{M_1&=M_2 \text{ if and only if }\\ \mcl{L}_1&=\mcl{L}_2,\mcl{K}_1=\mcl{K}_2,[a_1:b_1]=[a_2:b_2]\in \mbb{PF}^1_{\mcl{J}} \text{ where }\mcl{J}=\mcl{I}_1=\mcl{I}_2.}
\end{thmOmega}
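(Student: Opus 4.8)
The assertion is an ``if and only if,'' and I would treat the two directions separately. The forward implication $M_1=M_2\Rightarrow$ (all three invariants agree) is essentially a restatement of well-definedness: the invariant factor ideals $\mcl{L},\mcl{K}$ are recovered from the isomorphism type of $\mcl{O}^2/M$ by the uniqueness clause of the structure theorem for finitely generated torsion modules over a Dedekind domain, so $M_1=M_2$ forces $\mcl{L}_1=\mcl{L}_2$ and $\mcl{K}_1=\mcl{K}_2$, hence $\mcl{I}_1=\mcl{I}_2$ from $\mcl{K}_i=\mcl{L}_i\mcl{I}_i$ and invertibility of these ideals; and $[a:b]\in\mbb{PF}^1_{\mcl{I}}$ is, by Proposition~\ref{prop:ProjSpaceElement}, a well-defined function of $M$ alone, so it too agrees. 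The substance lies entirely in the reverse implication.

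For the reverse implication the plan is to write down an explicit formula recovering $M$ from its invariants. I claim that
\[
M\;=\;\mcl{O}\cdot(ta,tb)\;+\;\mcl{K}\oplus\mcl{K}
\]
for \emph{every} $t\in\mcl{L}\bs\big(\us{i=1}{\os{r}{\cup}}\mcl{L}\mcl{M}_i\big)$ and \emph{every} representative $(a,b)$ of the class $[a:b]$, where $(ta,tb)\in M$ for such a $t$ by Proposition~\ref{prop:ProjSpaceElement}. The inclusion $\mcl{O}(ta,tb)+\mcl{K}\oplus\mcl{K}\sbq M$ is immediate, since $(ta,tb)\in M$ and $\mcl{K}\oplus\mcl{K}=\mcl{K}\mcl{O}^2\sbq M$ because $\mcl{O}^2/M\cong\mcl{O}/\mcl{L}\oplus\mcl{O}/\mcl{K}$ is annihilated by $\mcl{K}$. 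For equality I would localize at each maximal ideal $\mcl{M}$: if $\mcl{M}\nin\{\mcl{M}_1,\ldots,\mcl{M}_r\}$ both sides localize to $\mcl{O}_{\mcl{M}}^2$; if $\mcl{M}=\mcl{M}_i$, unimodularity of $(a,b)$ lets me complete it to an $\mcl{O}_{\mcl{M}_i}$-basis $\{(a,b),f\}$, and since $v_{\mcl{M}_i}(t)=l_i\leq k_i$ the right-hand side localizes to $\mcl{M}_i^{l_i}(a,b)\mcl{O}_{\mcl{M}_i}\oplus\mcl{M}_i^{k_i}f\mcl{O}_{\mcl{M}_i}$, so that $\mcl{O}_{\mcl{M}_i}^2$ modulo it is isomorphic to $\mcl{O}_{\mcl{M}_i}/\mcl{M}_i^{l_i}\oplus\mcl{O}_{\mcl{M}_i}/\mcl{M}_i^{k_i}$, which has the same finite length as $\mcl{O}_{\mcl{M}_i}^2/M_{\mcl{M}_i}$. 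Since a submodule of $M$ is contained in $M$ locally everywhere with equal colength in $\mcl{O}^2$, the localizations coincide at every maximal ideal, hence the global modules coincide.

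It then remains to check that the right-hand side of the displayed formula depends only on $\mcl{L},\mcl{K}$ and the class $[a:b]\in\mbb{PF}^1_{\mcl{I}}$, not on the auxiliary choices. Independence of $t$ is automatic once any admissible $t$ is known to reproduce $M$. For independence of the representative I would prove a local lemma: if $(a,b),(c,d)\in(\mcl{O}^2)^{*}$ with $ad-bc\in\mcl{I}$ (that is, $(a,b)\sim(c,d)$), then $\mcl{O}(ta,tb)+\mcl{K}\oplus\mcl{K}=\mcl{O}(tc,td)+\mcl{K}\oplus\mcl{K}$, verified at each $\mcl{M}_i$. When $k_i=l_i$ both localizations equal $\mcl{M}_i^{l_i}\mcl{O}_{\mcl{M}_i}^2$ irrespective of the unimodular vector. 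When $k_i>l_i$: if, say, $a$ is a unit at $\mcl{M}_i$ then $c$ must be a unit at $\mcl{M}_i$ as well, otherwise $ad-bc$ would be a unit there, contradicting $ad-bc\in\mcl{I}\sbq\mcl{M}_i$; and then multiplying $(ta,tb)$ by the unit $c/a$ and invoking $v_{\mcl{M}_i}(ad-bc)\geq k_i-l_i$ together with $v_{\mcl{M}_i}(t)=l_i$ shows $(tc,td)\in\mcl{O}_{\mcl{M}_i}(ta,tb)+\mcl{M}_i^{k_i}\mcl{O}_{\mcl{M}_i}^2$, and the reverse containment is symmetric. Granting this lemma, if $M_1,M_2$ have identical invariants then both equal the single submodule given by the formula, so $M_1=M_2$. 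The main obstacle is exactly this local accounting: keeping track, at each $\mcl{M}_i$, of which of $a,b$ is a unit, confirming that $v_{\mcl{M}_i}(ad-bc)$ precisely absorbs the gap $k_i-l_i$, and being disciplined about invoking the non-canonical isomorphism $\mcl{O}^2/M\cong\mcl{O}/\mcl{L}\oplus\mcl{O}/\mcl{K}$ only after localizing, where the colength comparisons are legitimate.
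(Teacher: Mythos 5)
Your argument is correct, but the reverse direction is packaged genuinely differently from the paper's. The paper proves uniqueness by pure reduction to the DVR case: it transports $[a:b]$ into each localized projective space $\mbb{PF}^1_{(\mcl{M}_i^{k_i-l_i})_{\mcl{M}_i}}$ via the Chinese remainder bijection and Proposition~\ref{prop:ProjSpaceBijection}, invokes the PID statement (Theorem~\ref{theorem:ProjInvariantPID}) to conclude that each $M_{\mcl{M}_i}$ is determined by the localized invariants, and reassembles $M$ as the intersection of its localizations (Proposition~\ref{prop:LocalIntersectionModule}). You instead exhibit the explicit global presentation $M=\mcl{O}(ta,tb)+\mcl{K}\oplus\mcl{K}$ and verify it by a colength comparison after localizing; uniqueness of $M$ then reduces to independence of the right-hand side from the choices, and your local lemma handles this correctly --- the bookkeeping $v_{\mcl{M}_i}(ad-bc)\geq k_i-l_i$, $v_{\mcl{M}_i}(t)=l_i$, including the degenerate case $k_i=l_i$, checks out, and independence of $t$ does follow since only the valuations of $t$ at the $\mcl{M}_i$ enter the local computation. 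What your route buys is a concrete two-term generating description of $M$ from its invariants, which the paper only produces locally; what the paper's route buys is economy, since Theorem~\ref{theorem:ProjInvariantPID} and the projective-space CRT bijection are already in place and do all the work. One caution on the forward direction: the well-definedness of the invariant element (that any two admissible pairs $(ta,tb),(sc,sd)\in M$ give the same class in $\mbb{PF}^1_{\mcl{I}}$), which you attribute to Proposition~\ref{prop:ProjSpaceElement}, is in fact only an \emph{existence} statement there; the paper establishes the uniqueness inside the proof of Theorem~\ref{theorem:ProjectiveInvariant} itself, using the uniqueness clause of Theorem~\ref{theorem:ProjInvariantPID} locally together with injectivity of the CRT map, and your write-up should supply an analogous argument rather than cite the proposition for it.
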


\begin{thmSigma}
\namedlabel{theorem:ProjectiveInvariantSurj}{$\Gs$}
Let $\mcl{O}$ be a Dedekind domain. Let $\mcl{L}\supseteq \mcl{K}\sbnq \mcl{O}$ be two ideals and $[a:b]\in \mbb{PF}^1_{\mcl{I}}$ where $\mcl{I}$ is given by the ideal factorization $\mcl{K}=\mcl{L}\mcl{I}$. Then there exists an $\mcl{O}$-submodule $M\subsetneq \mcl{O}^2$ which has the invariant ideals $\mcl{L}\supseteq \mcl{K}$ and has the projective space invariant element $[a:b]\in \mbb{PF}^1_{\mcl{I}}$.
\end{thmSigma}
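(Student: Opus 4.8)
The plan is to realise the required module as the image of the ``standard'' co-torsion module $\mcl{L}\oplus\mcl{K}$ under a determinant-one change of basis of $\mcl{O}^2$ built from the chosen representative of $[a:b]$. Since $[a:b]\in\mbb{PF}^1_{\mcl{I}}$, the representative $(a,b)$ lies in $(\mcl{O}^2)^{*}$, so $\langle a\rangle+\langle b\rangle=\mcl{O}$ and I may pick $c,d\in\mcl{O}$ with $ad-bc=1$. I would then set
\equa{g=\mattwo{a}{c}{b}{d},\qquad M=g\cdot(\mcl{L}\oplus\mcl{K})=\{\,(ax+cy,\ bx+dy)\ :\ x\in\mcl{L},\ y\in\mcl{K}\,\}\ \sbq\ \mcl{O}^2.}
Because $g\in\mathrm{SL}_2(\mcl{O})$ acts as an $\mcl{O}$-module automorphism of $\mcl{O}^2$, the set $M$ is an $\mcl{O}$-submodule; it is proper because $\mcl{K}\sbnq\mcl{O}$ gives $\mcl{L}\oplus\mcl{K}\sbnq\mcl{O}^2$ and $g$ is bijective. (Here $\mcl{K}\neq(0)$ is implicit in the phrase ``the ideal factorization $\mcl{K}=\mcl{L}\mcl{I}$''; otherwise no submodule with these data could be co-torsion.)

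First I would pin down the invariant factor ideals. Multiplication by $g^{-1}$ is an $\mcl{O}$-module automorphism of $\mcl{O}^2$ carrying $M$ onto $\mcl{L}\oplus\mcl{K}$, hence it induces an isomorphism $\frac{\mcl{O}^2}{M}\cong\frac{\mcl{O}^2}{\mcl{L}\oplus\mcl{K}}\cong\frac{\mcl{O}}{\mcl{L}}\oplus\frac{\mcl{O}}{\mcl{K}}$. Thus $M$ is co-torsion with invariant factor ideals $\mcl{L}\spq\mcl{K}$, and, by uniqueness of ideal factorisation in the Dedekind domain $\mcl{O}$, its projective space invariant ideal is the unique ideal $\mcl{I}$ with $\mcl{K}=\mcl{L}\mcl{I}$, i.e.\ exactly the ideal in the statement; so $\mbb{PF}^1_{\mcl{I}}$ is the correct ambient space.

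Next I would identify the projective space invariant element. Let $\mcl{M}_1,\dots,\mcl{M}_r$ and $l_i$ be the maximal ideals and exponents attached to $M$ as in Definition~\ref{defn:Invariants}, so $\mcl{L}=\prod_{i=1}^{r}\mcl{M}_i^{l_i}$. I fix any $t\in\mcl{L}\bs\bigl(\bigcup_{i=1}^{r}\mcl{L}\mcl{M}_i\bigr)$; such $t$ exists by the approximation theorem for Dedekind domains (equivalently, this non-emptiness is part of Proposition~\ref{prop:ProjSpaceElement}). Since $(t,0)\in\mcl{L}\oplus\mcl{K}$, the explicit description of $M$ yields $(ta,tb)=g\cdot(t,0)\in M$, which is precisely of the shape required in Definition~\ref{defn:Invariants}(4), with $\langle a\rangle+\langle b\rangle=\mcl{O}$. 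By the well-definedness of the projective space invariant element (Proposition~\ref{prop:ProjSpaceElement}), the invariant element of $M$ is therefore $[a:b]\in\mbb{PF}^1_{\mcl{I}}$, and $M$ is the module sought.

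The only step that is not pure bookkeeping is the existence of $t$ with the prescribed $\mcl{M}_i$-adic valuations — equivalently, that $\mcl{L}$ is not covered by the finitely many submodules $\mcl{L}\mcl{M}_i$ — which is a standard prime-avoidance/approximation fact for Dedekind domains and is in any case already supplied by Proposition~\ref{prop:ProjSpaceElement}. Conceptually the argument shows that $\mathrm{SL}_2(\mcl{O})$ acts on the co-torsion submodules of $\mcl{O}^2$, preserving the invariant factor ideals and transporting the invariant element by the evident projective action; the standard module $\mcl{L}\oplus\mcl{K}$ carries invariant element $[1:0]$, and $g$ moves it to $[a:b]$.
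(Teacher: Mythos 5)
Your proposal is correct, but it takes a genuinely different route from the paper's. The paper builds $M$ locally: it invokes Theorem~\ref{theorem:ProjInvariantPID} in each localization $\mcl{O}_{\mcl{M}_i}$ (a DVR, hence a PID) to produce modules $M_i\sbnq \mcl{O}^2_{\mcl{M}_i}$ with the prescribed local invariants, defines $M$ as the intersection of the $M_i$ with $\mcl{O}^2_{\mcl{M}}$ over all remaining maximal ideals $\mcl{M}$, and then checks by exhibiting explicit bases that $M\sbq\mcl{O}^2$, that $M_{\mcl{M}_i}=M_i$, and that $M_{\mcl{M}}=\mcl{O}^2_{\mcl{M}}$ elsewhere. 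You instead construct $M$ globally in one stroke as $g\cdot(\mcl{L}\oplus\mcl{K})$ for a single $g\in SL_2(\mcl{O})$ whose first column is $(a,b)^{T}$ --- such a $g$ exists precisely because $\langle a\rangle+\langle b\rangle=\mcl{O}$ --- so the quotient computation $\frac{\mcl{O}^2}{M}\cong\frac{\mcl{O}}{\mcl{L}}\oplus\frac{\mcl{O}}{\mcl{K}}$ is immediate and the invariant element is read off from the single witness $(ta,tb)=g\cdot(t,0)$. The only external inputs you need are prime avoidance for the existence of $t$ (already assumed in the paper's Section~4 remark) and the well-definedness of the projective space invariant element; note that the uniqueness half of that well-definedness is actually established in the proof of Theorem~\ref{theorem:ProjectiveInvariant} rather than in Proposition~\ref{prop:ProjSpaceElement} as you cite, but since Theorem~\ref{theorem:ProjectiveInvariant} precedes Theorem~\ref{theorem:ProjectiveInvariantSurj} there is no circularity. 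Your argument is shorter, dispenses with all localization bookkeeping, and makes transparent that $SL_2(\mcl{O})$ acts transitively on the co-torsion modules with fixed invariant factor ideals, with $\mcl{L}\oplus\mcl{K}$ as base point carrying invariant $[1:0]$; the paper's local--global construction instead reuses the localization machinery of Proposition~\ref{prop:LocalIntersectionModule} and mirrors the proof of Theorem~\ref{theorem:ProjectiveInvariant}, at the cost of several extra verifications.
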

\section{\bf{Rank Two Co-torsion Modules over Principal Ideal Domains}}
In this section we consider rank two co-torsion modules $M\subsetneq \mcl{O}^2$ where $\mcl{O}$ is a principal ideal domain. We prove in this section the analogue of main Theorems~[\ref{theorem:ProjectiveInvariant},\ref{theorem:ProjectiveInvariantSurj}] for a principal ideal domains $\mcl{O}$ and give one application in Theorem~\ref{theorem:NumberofSubgroupsofFiniteIndex}. For an ideal $\mcl{I}=(d)$, $\mbb{PF}^1_{(d)}$ is denoted by just $\mbb{PF}^1_d$ as an abuse of notation even though $d$ is defined up to an associate. 
\begin{theorem}
\label{theorem:ProjInvariantPID}
Let $\mcl{O}$ be a principal ideal domain. Let $M\subsetneq \mcl{O}^2$ be a co-torsion module of rank two with elementary prime divisors $p_1,p_2,\ldots,p_r$ and integers $0\leq l_i\leq k_i\neq 0$ such that $\frac{\mcl{O}^2}{M}\cong \us{i=1}{\os{r}{\bigoplus}}\bigg(\frac{\mcl{O}}{p_i^{l_i}}\oplus \frac{\mcl{O}}{p_i^{k_i}}\bigg)$.
Let $d=p_1^{k_1-l_1}p_2^{k_2-l_2}\ldots p_r^{k_r-l_r}$. Then there is a unique projective space element 
$[a:b]\in \mbb{PF}^1_{d}$ such that $(d_1a,d_1b)\in M$ and for all elements $(d_1x,d_1y)\in M$ with $gcd(x,y)=1$ we have $[x:y]=[a:b]$ where $d_1=p_1^{l_1}p_2^{l_2}\ldots p_r^{l_r}$. Moreover if $d_2=p_1^{k_1}p_2^{k_2}\ldots p_r^{k_r}$ (has to be a non-unit) then for any matrix $\mattwo xyzw\in GL_2(\mcl{O})$ with $[x:y]=[a:b]$ the set $\{(d_1x,d_1y),(d_2z,d_2w)\}$ is a basis for $M$. This also yields a construction of a unique co-torsion module $M \sbnq \mcl{O}^2$ of rank two which has the invariant divisors $d_1,d_2$ and has the projective space invariant element $[a:b]\in \mbb{PF}^1_d$ where $d=\frac {d_2}{d_1}$. 
\end{theorem}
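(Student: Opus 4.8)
The plan is to route everything through the stacked basis theorem (Smith normal form) for the principal ideal domain $\mcl{O}$: the rank two submodule $M\sbnq \mcl{O}^2$ has a basis of the form $\{d_1 e_1', d_2 e_2'\}$ for some basis $\{e_1', e_2'\}$ of $\mcl{O}^2$ and nonzero $d_1\mid d_2$, and comparing $\mcl{O}^2/M\cong \mcl{O}/(d_1)\oplus \mcl{O}/(d_2)$ with the given elementary divisor decomposition (and rescaling $e_1', e_2'$ by units) one may take $d_1 = p_1^{l_1}\cdots p_r^{l_r}$, $d_2 = p_1^{k_1}\cdots p_r^{k_r}$, so $d = d_2/d_1$. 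I will write $P\in GL_2(\mcl{O})$ for the matrix with rows $e_1', e_2'$ and $D$ for the diagonal matrix with diagonal $d_1, d_2$, so that $M$ is the row span of $DP$, and I will repeatedly use the one fact that, for $(u_1, u_2), (v_1, v_2)\in(\mcl{O}^2)^{*}$, the relation $[u_1:u_2] = [v_1:v_2]$ in $\mbb{PF}^1_d$ means exactly $d\mid u_1v_2 - u_2v_1$ (the definition of $\mbb{PF}^1_{\mcl{I}}$).

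First I would pin down $[a:b]$. The vector $e_1'$ is part of a basis of $\mcl{O}^2$, hence primitive, and $d_1 e_1'\in M$, so $[a:b]:=[e_1']$ is a legitimate candidate. For any $(d_1 x, d_1 y)\in M$ with $\gcd(x,y)=1$, writing $d_1(x,y) = \ga\, d_1 e_1' + \gb\, d_2 e_2'$ and cancelling $d_1$ (valid since $\mcl{O}^2$ is torsion free) gives $(x,y) = \ga e_1' + \gb d\, e_2'$; the determinant of the pair $((x,y), e_1')$ is then $\gb d$ times the determinant of the pair $(e_2', e_1')$, which is a unit, so it lies in $(d)$ and $[x:y] = [e_1'] = [a:b]$. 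Taking $(x,y) = e_1'$ shows at least one such vector exists, and the displayed identity shows its class is forced; this is the first assertion.

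Next I would prove the basis statement. Let $g = \mattwo xyzw\in GL_2(\mcl{O})$ with $[x:y] = [a:b]$. The rows of $Dg$ form a basis of $M$ iff $Q:= Dg(DP)^{-1} = D(gP^{-1})D^{-1}\in GL_2(\mcl{O})$; since $\det Q = \det(gP^{-1})$ is automatically a unit, it is enough that $Q$ have entries in $\mcl{O}$, and conjugation by $D$ only changes the $(1,2)$ entry of $gP^{-1}$ (dividing it by $d$) and the $(2,1)$ entry (multiplying it by $d$), so the requirement is precisely $d\mid (gP^{-1})_{12}$. But the first row of $gP^{-1}$ is the coordinate vector of $(x,y)$ in the basis $\{e_1', e_2'\}$, and $[x:y] = [e_1']$ says exactly (via the determinant computation from the previous paragraph) that its $e_2'$-coordinate is divisible by $d$. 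Hence $Q\in GL_2(\mcl{O})$, the set $\{(d_1 x, d_1 y),(d_2 z, d_2 w)\}$ is a basis of $M$, and in particular both of those vectors lie in $M$.

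Finally, for the construction: given $d_1\mid d_2$ with $d_2$ a non-unit, $d=d_2/d_1$, and $[a:b]\in\mbb{PF}^1_d$, I would pick (using $\gcd(a,b)=1$) a matrix $g = \mattwo abzw\in GL_2(\mcl{O})$ and set $M:=\langle (d_1 a, d_1 b),(d_2 z, d_2 w)\rangle$; since the rows of $g$ form a basis of $\mcl{O}^2$, one gets $\mcl{O}^2/M\cong \mcl{O}/(d_1)\oplus\mcl{O}/(d_2)$, so $M\sbnq\mcl{O}^2$ (properly, as $d_2$ is a non-unit) is a rank two co-torsion module with invariant divisors $d_1, d_2$, and since $(a,b)$ is primitive with $d_1(a,b)\in M$, the first paragraph gives that its projective space invariant element is $[a:b]$. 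For the uniqueness clause, I would take any rank two co-torsion $N\sbnq\mcl{O}^2$ with invariant divisors $d_1, d_2$: by the stacked basis theorem and the first paragraph its invariant element is $[e_1'']$ for some basis vector $e_1''$ of $\mcl{O}^2$, and if $[e_1''] = [a:b]$, the basis statement applied to $N$ with this same $g$ shows $\{(d_1 a, d_1 b),(d_2 z, d_2 w)\}$ is also a basis of $N$, so $N = M$; in particular $M$ is independent of the choice of $g$ (and of the representative of $[a:b]$). The only genuine input is the stacked basis theorem; I expect the main friction to be purely bookkeeping — matching the elementary divisors $p_i^{l_i}, p_i^{k_i}$ with the invariant factors $d_1\mid d_2$ up to units, and being careful that the constructed $M$ really is independent of all choices — after which every step collapses to the single divisibility criterion for $\mbb{PF}^1_d$.
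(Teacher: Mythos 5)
Your proposal is correct and takes essentially the same route as the paper's proof: reduce to a stacked (Smith normal form) basis $\{(d_1x,d_1y),(d_2z,d_2w)\}$ with $\mattwo xyzw\in GL_2(\mcl{O})$, and observe that another such stacked pair is a basis of $M$ exactly when the change-of-basis matrix $D(gP^{-1})D^{-1}$ is integral, which is the divisibility condition $d\mid(x'y-xy')$ defining equality in $\mbb{PF}^1_d$. Your version merely spells out the existence/uniqueness of the invariant element and of the constructed module in more detail than the paper's terse argument.
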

\begin{proof}
First of all we observe that the module $M\subsetneq \mcl{O}^2$ is a free module of rank two. Let $\{(s,t),(u,v)\}$ be a basis of $M$. Then by reducing the matrix $\mattwo stuv$ into smith normal form $\mattwo {d_1}00{d_2}$ with $d_1\mid d_2$, we obtain $\frac{\mcl{O}^2}{M}\cong \frac{\mcl{O}}{(d_1)}\oplus \frac{\mcl{O}}{(d_2)}$. Here $d_1=p_1^{l_1}p_2^{l_2}\ldots p_r^{l_r},d_2=p_1^{k_1}p_2^{k_2}\ldots p_r^{k_r}$ with $d=d_2/d_1=p_1^{k_1-l_1}p_2^{k_2-l_2}\ldots p_r^{k_r-l_r}$. So we can always assume that there is a basis of $M$ of the form $\{(d_1x,d_1y),(d_2z,d_2w)\}$ where $\mattwo xyzw\in GL_2(\mcl{O})$. In particular there exists an element $(d_1a,d_1b)\in M$ with $gcd(a,b)=1$. Consider another $\mcl{O}$-linearly independent set $\{(d_1x'$, $d_1y'),(d_2z',d_2w')\}$. Then this is a basis for $M$ if and only there exists a matrix $\mattwo {\ga}{\gb}{\gga}{\gd}\in GL_2(\mcl{O})$ such that we have \equ{\mattwo {\ga}{\gb}{\gga}{\gd}\mattwo {d_1x}{d_1y}{d_2z}{d_2w}=\mattwo {d_1x'}{d_1y'}{d_2z'}{d_2w'}}
which happens if and only if $\frac{d_2}{d_1}=d\mid (x'y-xy')$. This proves the theorem.
\end{proof}
As a consequence we have the following theorem.
\begin{theorem}
\label{theorem:NumberofSubgroupsofFiniteIndex}
The set of subgroups of $\Z^2$ of finite index $n$ is in bijection with the disjoint union of integer projective spaces \equ{\us{d\mid n, \frac nd=\square}{\bigsqcup}\mbb{PF}^1_d.}
Under this bijection we have the following.
\begin{enumerate}
\item $M\in \mbb{PF}^1_n$ if and only if there exists $(a,b)\in M$ with $gcd(a,b)=1$ if and only if $\frac {\Z^2}{M}$ is a finite cyclic group.
\item $M\in \mbb{PF}^1_d$ for $d\mid n, \frac nd=\square$ if and only if $\frac{\Z^2}{M}\cong \frac{\Z}{\sqrt{\frac nd}} \oplus \frac{\Z}{\sqrt{nd}}$. 
\item The number of subgroups of $\Z^2$ of finite index $n$ is $\gs(n)$, the sum of divisors of $n$.
\end{enumerate} 
\end{theorem}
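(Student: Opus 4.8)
The plan is to deduce everything from Theorem~\ref{theorem:ProjInvariantPID} applied to the principal ideal domain $\mcl{O}=\Z$. First I would record that a subgroup $M\sbnq\Z^2$ of finite index $n$ is exactly a rank two co-torsion submodule of $\Z^2$, and that if its Smith normal form decomposition is $\frac{\Z^2}{M}\cong\frac{\Z}{(d_1)}\oplus\frac{\Z}{(d_2)}$ with $d_1\mid d_2$, then $n=[\Z^2:M]=d_1d_2$. By Theorem~\ref{theorem:ProjInvariantPID} the assignment $M\longmapsto\bigl(d_1,d_2,[a:b]\bigr)$, where $[a:b]\in\mbb{PF}^1_{d}$ with $d=d_2/d_1$ is the projective space invariant element of $M$, is a bijection from the set of rank two co-torsion $M\sbnq\Z^2$ onto the set of triples with $d_1\mid d_2$, $d_2$ a non-unit, and $[a:b]\in\mbb{PF}^1_{d_2/d_1}$; the single trivial subgroup $M=\Z^2$ is covered by the convention $d_1=d_2=1$, $\mbb{PF}^1_1$ a one-point set.

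Next I would impose the index constraint. Writing $d=d_2/d_1$, the equation $d_1d_2=n$ becomes $d_1^2d=n$, so $d\mid n$ and $n/d=d_1^2$ is a perfect square; conversely every divisor $d\mid n$ with $n/d=\square$ arises from the unique pair $d_1=\sqrt{n/d}$, $d_2=\sqrt{nd}$. Thus an index-$n$ subgroup $M$ is the same data as a divisor $d\mid n$ with $n/d=\square$ together with an element of $\mbb{PF}^1_d$, which is exactly the claimed bijection with $\us{d\mid n,\ n/d=\square}{\bigsqcup}\mbb{PF}^1_d$. By construction $M\in\mbb{PF}^1_d$ means $d_2/d_1=d$, i.e. $d_1=\sqrt{n/d}$ and $d_2=\sqrt{nd}$, i.e. $\frac{\Z^2}{M}\cong\frac{\Z}{\sqrt{n/d}}\oplus\frac{\Z}{\sqrt{nd}}$, which is (2); and (1) is the specialisation $d=n$, which forces $d_1=1$, $d_2=n$, so $\frac{\Z^2}{M}\cong\frac{\Z}{(n)}$ is finite cyclic, while conversely $\frac{\Z}{(d_1)}\oplus\frac{\Z}{(d_2)}$ with $d_1\mid d_2$ is cyclic only when $d_1=1$. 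For the ``primitive vector'' formulation in (1), note that Theorem~\ref{theorem:ProjInvariantPID} produces a basis $\{(d_1a,d_1b),(d_2z,d_2w)\}$ of $M$, so every element of $M$ has both coordinates divisible by $d_1$; hence an element of $M$ with coprime coordinates exists if and only if $d_1=1$.

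For (3) I would count using the bijection: the number of index-$n$ subgroups equals $\us{d\mid n,\ n/d=\square}{\sum}\#\mbb{PF}^1_d=\us{e\,:\,e^2\mid n}{\sum}\#\mbb{PF}^1_{n/e^2}$. Since $\#\mbb{PF}^1_d$ is multiplicative in $d$ (Chinese Remainder Theorem: $\mbb{PF}^1_{d'd''}\cong\mbb{PF}^1_{d'}\times\mbb{PF}^1_{d''}$ for coprime $d',d''$) and the condition ``$n/d$ a square'' factors through the prime factorisation of $n$, this arithmetic function is multiplicative, as is $\gs$; so it suffices to check $n=p^k$. A direct count gives $\#\mbb{PF}^1_{p^m}=p^m+p^{m-1}$ for $m\geq1$ and $\#\mbb{PF}^1_{1}=1$, and then summing the two-term expressions over $m=k,k-2,k-4,\dots$ (reading the $m=0$ term as $1$) rearranges to $1+p+p^2+\cdots+p^k=\gs(p^k)$, which proves (3).

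I do not expect a genuine obstacle here once Theorem~\ref{theorem:ProjInvariantPID} is in hand: the only real computation is the identity $\sum_{e^2\mid n}\#\mbb{PF}^1_{n/e^2}=\gs(n)$, dispatched by multiplicativity and the prime-power rearrangement above, and the only points requiring care are the bookkeeping that the index condition is precisely ``$n/d$ a perfect square'' and the degenerate cases $n=1$ and $d_1=1$.
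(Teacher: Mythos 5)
Your proposal is correct and follows the same route as the paper: it deduces the bijection from Theorem~\ref{theorem:ProjInvariantPID} applied to $\mcl{O}=\Z$, and proves $(3)$ via multiplicativity of $d\mapsto \#\mbb{PF}^1_d$ under the Chinese remainder reduction together with the count $\#\mbb{PF}^1_{p^m}=p^{m-1}(p+1)$. You simply spell out the steps the paper compresses (the bookkeeping that $d_1d_2=n$ with $d=d_2/d_1$ is equivalent to $n/d$ being a perfect square, the equivalences in $(1)$, and the prime-power rearrangement giving $\gs(p^k)$), all of which check out.
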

\begin{proof}
If $n=1$ the proof is trivial. So assume $n>1$.
The consequences $(1),(2)$ are immediate and we will prove $(3)$. For this we observe that for a prime $p$ and $k\in \N$, \equ{\#(\mbb{PF}^1_{p^k})=p^{k-1}(p+1).} We also have that the chinese remainder reduction map gives a bijection of the sets $\mbb{PF}^1_m$ and $\us{i=1}{\os{r}{\prod}}\mbb{PF}^1_{m_i}$ where $m=m_1m_2\ldots m_r$
and $gcd(m_i,m_j)=1,1\leq i\neq j\leq r$ (refer to Theorem $1.6$ in C.P. Anil Kumar~\cite{MR3887364}).
Now $(3)$ follows.
\end{proof}
We give one more consequence of the previous theorem. Here we relate the zeta function of $\Z^2$ with the zeta function of the one dimensional projective spaces over integers. 
\begin{cor}
\label{cor:ZetaFunctions}	
For, $s\in \mbb{C},\operatorname{Re}(s)>2$, we have the zeta function $\zeta_{\Z^2}(s)$ associated to subgroups of $\Z^2$ of finite index defined as
\equ{\zeta_{\Z^2}(s)=\us{n\in \N}{\sum} \frac{\us{d\mid n, \frac nd =\square}{\sum}\mid \mbb{PF}^1_d\mid}{n^s}}
is given by  \equ{\zeta_{\Z^2}(s)=\zeta(s-1)\zeta(s) \text{ where }\zeta(s)=\us{n\in \N}{\sum}\frac 1{n^s}\text{ the usual zeta function}.}
The zeta function $\zeta_{\mbb{PF}^1}(s)$ of the one dimensional projective spaces over integers defined as 
\equ{\zeta_{\mbb{PF}^1}(s)=\us{d\in \N}{\sum} \frac{\mid \mbb{PF}^1_d\mid}{d^s}}
satisfies the equation \equ{\zeta_{\Z^2}(s)=\zeta(2s)\zeta_{\mbb{PF}^1}(s).}
\end{cor}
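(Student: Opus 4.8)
The plan is to read off both identities from the arithmetic already contained in Theorem~\ref{theorem:NumberofSubgroupsofFiniteIndex} by a short manipulation of Dirichlet series. Write $a_n$ for the number of subgroups of $\Z^2$ of index $n$. The bijection of that theorem gives $a_n=\us{d\mid n,\ n/d=\square}{\sum}\mid\mbb{PF}^1_d\mid$, which is exactly the numerator appearing in the definition of $\zeta_{\Z^2}(s)$, while part $(3)$ of the same theorem gives in addition $a_n=\gs(n)=\us{d\mid n}{\sum}d$. Before manipulating the series I would record the relevant half-plane of absolute convergence: $\gs(n)=O_{\gep}(n^{1+\gep})$ for every $\gep>0$, and, using $\#(\mbb{PF}^1_{p^k})=p^{k-1}(p+1)\leq 2p^k$ together with the multiplicativity of $d\mapsto\mid\mbb{PF}^1_d\mid$ from Theorem~\ref{theorem:NumberofSubgroupsofFiniteIndex}, one gets $\mid\mbb{PF}^1_d\mid\leq 2^{\gom(d)}d=O_{\gep}(d^{1+\gep})$. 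Hence both $\us{n}{\sum}\gs(n)n^{-s}$ and $\zeta_{\mbb{PF}^1}(s)$ converge absolutely for $\operatorname{Re}(s)>2$, and in that region all rearrangements below are legitimate.

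For the first identity I would observe that $\gs=\mbf 1\ast\mrm{id}$ as a Dirichlet convolution, where $\mbf 1(n)=1$ and $\mrm{id}(n)=n$; therefore the associated Dirichlet series factor as $\us{n}{\sum}\gs(n)n^{-s}=\big(\us{n}{\sum}n^{-s}\big)\big(\us{n}{\sum}n^{1-s}\big)=\zeta(s)\zeta(s-1)$ for $\operatorname{Re}(s)>2$, which is precisely the stated formula $\zeta_{\Z^2}(s)=\zeta(s-1)\zeta(s)$ once one recalls $a_n=\gs(n)$.

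For the second identity I would expand $\zeta(2s)=\us{m\geq1}{\sum}m^{-2s}$ as the Dirichlet series $\us{k\geq1}{\sum}b_k k^{-s}$ whose coefficient $b_k$ equals $1$ when $k$ is a perfect square and $0$ otherwise. Forming the Cauchy product of the two absolutely convergent series $\zeta(2s)$ and $\zeta_{\mbb{PF}^1}(s)$ and grouping the terms with $kd=n$, the coefficient of $n^{-s}$ in $\zeta(2s)\zeta_{\mbb{PF}^1}(s)$ becomes $\us{kd=n}{\sum}b_k\mid\mbb{PF}^1_d\mid=\us{d\mid n,\ n/d=\square}{\sum}\mid\mbb{PF}^1_d\mid=a_n$, which is exactly the numerator defining $\zeta_{\Z^2}(s)$. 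Hence $\zeta_{\Z^2}(s)=\zeta(2s)\zeta_{\mbb{PF}^1}(s)$ throughout $\operatorname{Re}(s)>2$. Combining the two identities incidentally yields the closed form $\zeta_{\mbb{PF}^1}(s)=\zeta(s-1)\zeta(s)/\zeta(2s)$.

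There is essentially no obstacle of substance here; the only points that call for a little care are the growth bound on $\mid\mbb{PF}^1_d\mid$ (needed both to make sense of $\zeta_{\mbb{PF}^1}(s)$ and to justify termwise multiplication of Dirichlet series) and the clean identification of $\zeta(2s)$ with the Dirichlet series supported on perfect squares. Both are routine, so the whole argument amounts to a few lines once Theorem~\ref{theorem:NumberofSubgroupsofFiniteIndex} is available.
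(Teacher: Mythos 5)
Your proposal is correct and follows essentially the same route as the paper: both arguments reduce the first identity to the classical factorization of $\us{n}{\sum}\gs(n)n^{-s}$ (the paper via the Euler product, you via the convolution $\gs=\mbf 1\ast\mrm{id}$, which is the same computation), and both obtain the second identity by multiplying out $\zeta(2s)\zeta_{\mbb{PF}^1}(s)$ and regrouping the terms with $n=dm^2$. Your added convergence bounds are a harmless refinement of what the paper leaves implicit.
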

\begin{proof}
\equa{\us{n\geq 1}{\sum} \frac{\us{d\mid n, \frac nd =\square}{\sum}\mid \mbb{PF}^1_d\mid}{n^s}&=\us{n\geq 1}{\sum}\frac{\gs(n)}{n^s}\\
	&=\us{p \text{-prime}}{\prod}\bigg(\us{k\geq 0}{\sum}\frac{\gs(p^k)}{p^{ks}}\bigg)\\
	&=\us{p\text{-prime}}{\prod}\bigg(\frac{1}{1-\frac 1{p^{s-1}}}\bigg)\bigg(\frac{1}{1-\frac 1{p^s}}\bigg)=\zeta(s-1)\zeta(s).}
To prove the second equality we observe that 
\equa{\zeta(2s)\us{d\geq 1}{\sum} \frac{\mid \mbb{PF}^1_d\mid}{d^s}&=\us{d\geq 1,m\geq 1}{\sum}\frac{\mid \mbb{PF}^1_d\mid}{(dm^2)^s}\\
&=\us{n\geq 1}{\sum} \frac{\us{d\mid n, \frac nd =\square}{\sum}\mid \mbb{PF}^1_d\mid}{n^s}\\ &=\zeta(s-1)\zeta(s).}
\end{proof}
\section{\bf{Preliminaries}}
In this section we prove some preliminaries which are required to prove the main theorem of the article for Dedekind domains.
\begin{remark}
We will assume the following basic facts about a Dedekind domain $\mcl{O}$. 
\begin{itemize}
\item Let $\mcl{F}$ be a finite set of maximal ideals in $\mcl{O}$. Let $\mcl{I}\subseteq \mcl{O}$ be any non-zero ideal. Then the set $\mcl{I}\bs \bigg(\us{\mcl{M}\in \mcl{F}}{\bigcup}\mcl{I}\mcl{M}\bigg) \neq \es$.
\item Let $p,q\in \mcl{O}$ with ideal factorization as given by $(p)=\mcl{M}^{k_1}\mcl{M}^{k_2}\ldots \mcl{M}^{k_r}$, $(q)=\mcl{N}^{l_1}\mcl{N}^{l_2}\ldots \mcl{N}^{l_s}$. Then $(pq)=\mcl{M}^{k_1}\mcl{M}^{k_2}\ldots \mcl{M}^{k_r}\mcl{N}^{l_1}\mcl{N}^{l_2}\ldots \mcl{N}^{l_s}, p\nin \mcl{M}_i^{k_i+1},1\leq i\leq r$ and $pq\nin \mcl{M}_i^{k_i+l_j+1}$ if $\mcl{M}_i=\mcl{N}_j$.  
\end{itemize}

\end{remark}
\begin{prop}
	\label{prop:LocalIntersectionModule}
Let $\mcl{O}$ be a Dedekind domain. Let $M\sbnq \mcl{O}^2$ be a rank two co-torsion module. Let $\mcl{M}_i,1\leq i\leq r$ be the elementary maximal ideal divisor invariants of $M$ as in Definition~\ref{defn:Invariants}. Then in the vector space $S^{-1}\mcl{O}\oplus S^{-1}\mcl{O}$ with $S=\mcl{O}\bs \{0\}$, we have \equ{M=\us{i=1}{\os{r}{\bigcap}}M_{\mcl{M}_i}\us{\mcl{M}\neq \mcl{M}_i,1\leq i\leq r, \mcl{M}-maximal}{\bigcap}\mcl{O}^2_{\mcl{M}}}
where $M_{\mcl{M}_i}$ is the localization of $M$ at the maximal ideal $\mcl{M}_i\sbnq\mcl{O}$ for $1\leq i\leq r$.
\end{prop}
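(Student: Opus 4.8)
The plan is to deduce the statement from the general local--global principle for submodules, and then to pin down the localizations at the maximal ideals not among the $\mcl{M}_i$. First I would fix the ambient object: since $M\sbnq\mcl{O}^2$ is co-torsion we have $S^{-1}M=S^{-1}\mcl{O}^2=V$, where $V=S^{-1}\mcl{O}\oplus S^{-1}\mcl{O}$, and since $\mcl{O}^2$ is torsion free, for every maximal ideal $\mcl{M}$ the canonical maps $M\hookrightarrow M_{\mcl{M}}\hookrightarrow V$ and $\mcl{O}^2_{\mcl{M}}\hookrightarrow V$ are injective. Thus $M$, all the $M_{\mcl{M}}$, and all the $\mcl{O}^2_{\mcl{M}}$ are genuine $\mcl{O}$-submodules of the single vector space $V$, so the intersection in the statement is meaningful and the assertion is an equality of subsets of $V$. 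The proof then splits into two claims: (i) $M=\us{\mcl{M}\ \text{maximal}}{\bigcap}M_{\mcl{M}}$, and (ii) $M_{\mcl{M}}=\mcl{O}^2_{\mcl{M}}$ whenever $\mcl{M}$ is maximal and $\mcl{M}\neq\mcl{M}_i$ for all $1\leq i\leq r$.

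For claim (i), the inclusion $M\sbq\us{\mcl{M}}{\bigcap}M_{\mcl{M}}$ is clear. Conversely, take $x\in\us{\mcl{M}}{\bigcap}M_{\mcl{M}}$ and consider the conductor ideal $\mfr{a}=\{a\in\mcl{O}\mid ax\in M\}$, which is an ideal of $\mcl{O}$ because $M$ is an $\mcl{O}$-submodule of $V$. If $\mfr{a}\neq\mcl{O}$, then $\mfr{a}\sbq\mcl{M}$ for some maximal ideal $\mcl{M}$; but $x\in M_{\mcl{M}}$ means $x=m/s$ with $m\in M$ and $s\in\mcl{O}\bs\mcl{M}$, whence $sx=m\in M$ gives $s\in\mfr{a}\bs\mcl{M}$, a contradiction. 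Hence $1\in\mfr{a}$ and $x\in M$.

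For claim (ii), localize the short exact sequence $0\to M\to\mcl{O}^2\to\mcl{O}^2/M\to 0$ at $\mcl{M}$ to get $0\to M_{\mcl{M}}\to\mcl{O}^2_{\mcl{M}}\to(\mcl{O}^2/M)_{\mcl{M}}\to 0$, so it suffices to show $(\mcl{O}^2/M)_{\mcl{M}}=0$. Using the elementary maximal ideal divisor decomposition $\mcl{O}^2/M\cong\us{i=1}{\os{r}{\bigoplus}}\big(\mcl{O}/\mcl{M}_i^{l_i}\oplus\mcl{O}/\mcl{M}_i^{k_i}\big)$, it is enough to see each summand $(\mcl{O}/\mcl{M}_i^{k_i})_{\mcl{M}}$ vanishes (the $l_i$ case being identical). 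Since $\mcl{M}$ is prime, $\mcl{M}\neq\mcl{M}_i$ and $k_i\geq 1$, we cannot have $\mcl{M}_i^{k_i}\sbq\mcl{M}$, so pick $s\in\mcl{M}_i^{k_i}\bs\mcl{M}$; then $s$ annihilates $\mcl{O}/\mcl{M}_i^{k_i}$ while being a unit in $\mcl{O}_{\mcl{M}}$, forcing $(\mcl{O}/\mcl{M}_i^{k_i})_{\mcl{M}}=0$. Therefore $(\mcl{O}^2/M)_{\mcl{M}}=0$ and $M_{\mcl{M}}=\mcl{O}^2_{\mcl{M}}$.

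Combining (i) and (ii), $M=\us{\mcl{M}\ \text{maximal}}{\bigcap}M_{\mcl{M}}=\us{i=1}{\os{r}{\bigcap}}M_{\mcl{M}_i}\ \us{\mcl{M}\neq\mcl{M}_i,\ \mcl{M}\ \text{maximal}}{\bigcap}\mcl{O}^2_{\mcl{M}}$, which is exactly the asserted identity. I do not expect a genuine obstacle; the only points that require care are the bookkeeping that keeps every localization realized inside the common vector space $V$ (so the injectivity of localization maps is invoked legitimately and the intersection is literal), and the observation, immediate from the elementary divisor decomposition, that the support of $\mcl{O}^2/M$ is precisely $\{\mcl{M}_1,\dots,\mcl{M}_r\}$.
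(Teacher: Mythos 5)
Your proposal is correct and follows the same route as the paper: reduce to the identity $M=\bigcap_{\mcl{M}}M_{\mcl{M}}$ over all maximal ideals and then show $M_{\mcl{M}}=\mcl{O}^2_{\mcl{M}}$ off the support of $\mcl{O}^2/M$ via exactness of localization and the elementary divisor decomposition. The only difference is that you supply a proof (via the conductor ideal) of the local--global intersection formula, which the paper simply quotes as a known fact.
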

\begin{proof}
For the module $M$ we have \equ{M=\us{\mcl{M}\in MaxSpec(\mcl{O})}{\bigcap}M_{\mcl{M}}.}	
Since \equ{\frac{\mcl{O}^2}{M} \cong \us{i=1}{\os{r}{\bigoplus}}\bigg(\frac{\mcl{O}}{\mcl{M}_i^{l_i}}\oplus\frac{\mcl{O}}{\mcl{M}_i^{k_i}}\bigg)}
we have \equ{\frac{\mcl{O}^2_{\mcl{M}}}{M_{\mcl{M}}}= \bigg(\frac{\mcl{O}^2}{M}\bigg)_{\mcl{M}}=0\text{ if }\mcl{M}\neq \mcl{M}_i,1\leq i\leq r \Ra M_{\mcl{M}}=\mcl{O}^2_{\mcl{M}}.}
Hence the proposition follows.
\end{proof}
Now we prove two more useful propositions.
\begin{prop}
	\label{prop:ProjSpaceBijection}
	Let $\mcl{O}$ be a Dedekind domain and $\mcl{M}\sbnq\mcl{O}$ be a maximal ideal. Let $\mcl{M}_{\mcl{M}}$ denote the corresponding maximal ideal in $\mcl{O}_{\mcl{M}}$ and let $k\in \N$. Then the map
	\equ{\mbb{PF}^1_{\mcl{M}^k} \lra \mbb{PF}^1_{\mcl{M}^k_{\mcl{M}}}}
	given by \equ{[a:b] \lra [\frac a1:\frac b1]} is well defined and is a bijection.
\end{prop}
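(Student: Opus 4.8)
The plan is to check the three standard requirements in turn: that the assignment $[a:b]\mapsto[\tfrac a1:\tfrac b1]$ is \emph{well defined}, \emph{injective}, and \emph{surjective}. The whole argument is a compatibility check between the construction of $\mbb{PF}^1$ and localization, and it rests on four elementary facts about $\mcl{O}$ and the discrete valuation ring $\mcl{O}_{\mcl{M}}$: (i) if $\langle a\rangle+\langle b\rangle=\mcl{O}$ then $\langle\tfrac a1\rangle+\langle\tfrac b1\rangle=\mcl{O}_{\mcl{M}}$, since extension of ideals along $\mcl{O}\to\mcl{O}_{\mcl{M}}$ turns sums into sums and $\mcl{O}$ into $\mcl{O}_{\mcl{M}}$; (ii) $\mcl{M}^k_{\mcl{M}}=\mcl{M}^k\mcl{O}_{\mcl{M}}$ and its contraction back to $\mcl{O}$ is again $\mcl{M}^k$, because $\mcl{M}^k$ is $\mcl{M}$-primary; (iii) the canonical map $\mcl{O}/\mcl{M}^k\to\mcl{O}_{\mcl{M}}/\mcl{M}^k_{\mcl{M}}$ is an isomorphism, as every element of $\mcl{O}\bs\mcl{M}$ is already a unit modulo $\mcl{M}^k$; and (iv) a unimodular row over the local ring $\mcl{O}_{\mcl{M}}$ always has a unit entry, since otherwise both entries lie in $\mcl{M}_{\mcl{M}}$.

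First I would handle well-definedness and injectivity together. Given $(a,b)\in(\mcl{O}^2)^{*}$, fact (i) shows $(\tfrac a1,\tfrac b1)\in(\mcl{O}_{\mcl{M}}^2)^{*}$, so the target class is legitimate; and if $(a,b)\sim(c,d)$ in $\mbb{PF}^1_{\mcl{M}^k}$, i.e.\ $ad-bc\in\mcl{M}^k$, then $\tfrac a1\tfrac d1-\tfrac b1\tfrac c1=\tfrac{ad-bc}1\in\mcl{M}^k\mcl{O}_{\mcl{M}}=\mcl{M}^k_{\mcl{M}}$ by (ii), so the images coincide; hence the map is well defined. Conversely, if $[\tfrac a1:\tfrac b1]=[\tfrac c1:\tfrac d1]$ in $\mbb{PF}^1_{\mcl{M}^k_{\mcl{M}}}$ then $\tfrac{ad-bc}1\in\mcl{M}^k_{\mcl{M}}$, so $ad-bc$ lies in the contraction $\mcl{M}^k_{\mcl{M}}\cap\mcl{O}=\mcl{M}^k$ by (ii), giving $[a:b]=[c:d]$; so the map is injective.

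For surjectivity, take $[\ga:\gb]\in\mbb{PF}^1_{\mcl{M}^k_{\mcl{M}}}$ with $\langle\ga\rangle+\langle\gb\rangle=\mcl{O}_{\mcl{M}}$. By (iv) one entry is a unit, say $\ga\in\mcl{O}_{\mcl{M}}^{\times}$ (the other case is symmetric), and then $[\ga:\gb]=[1:\gb\ga^{-1}]$ because $\ga(\gb\ga^{-1})-\gb\cdot 1=0$. Writing $\gga=\gb\ga^{-1}\in\mcl{O}_{\mcl{M}}$, fact (iii) produces $c\in\mcl{O}$ with $\tfrac c1-\gga\in\mcl{M}^k_{\mcl{M}}$. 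Then $(1,c)\in(\mcl{O}^2)^{*}$, and since $1\cdot\gga-\tfrac c1\cdot 1\in\mcl{M}^k_{\mcl{M}}$ we get $[\tfrac 11:\tfrac c1]=[1:\gga]=[\ga:\gb]$; thus $[1:c]\in\mbb{PF}^1_{\mcl{M}^k}$ maps to $[\ga:\gb]$, and together with the previous paragraph the map is a bijection.

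The only mildly delicate ingredients are facts (ii) and (iii): that inverting elements of $\mcl{O}\bs\mcl{M}$ neither enlarges $\mcl{M}^k$ upon contraction nor alters the quotient ring $\mcl{O}/\mcl{M}^k$. Both hold because $\mcl{O}/\mcl{M}^k$ is already local with maximal ideal $\mcl{M}/\mcl{M}^k$, so the elements being inverted are units there; in a Dedekind domain these are also immediate from unique factorization of ideals. Everything else is a direct unwinding of the definition of $\mbb{PF}^1$ and the fact (noted in the Remark following the definition) that an equivalence $(a,b)\sim(c,d)$ is governed precisely by membership of $ad-bc$ in the ambient ideal. I do not anticipate a genuine obstacle here; the proposition is a routine naturality statement for the projective-space construction under localization.
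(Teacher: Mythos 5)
Your proof is correct and follows essentially the same route as the paper: surjectivity by normalizing an element of $\mbb{PF}^1_{\mcl{M}^k_{\mcl{M}}}$ to have a unit (hence $=1$) coordinate and then approximating the other coordinate by an element of $\mcl{O}$ modulo $\mcl{M}^k_{\mcl{M}}$, and injectivity via the contraction $\mcl{M}^k_{\mcl{M}}\cap\mcl{O}=\mcl{M}^k$. You are merely more explicit than the paper, which states the coordinate-normalization step directly and dismisses injectivity as clear.
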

\begin{proof}
Any element in  $\mbb{PF}^1_{\mcl{M}^k_{\mcl{M}}}$ is either of the form $[1:\frac bt]$ where $b\in \mcl{O},t\in \mcl{O}\bs \mcl{M}$ or of the form $[\frac as: 1]$ where $a\in \mcl{O},s\in \mcl{O}\bs \mcl{M}$.
Consider $[1:\frac bt]$ w.l.o.g. Now there exists a $t'\in \mcl{O}\bs\mcl{M}$ such that $tt'-1\in \mcl{M}^k$. Hence we have $[1:\frac bt]=[1:bt']\in \mbb{PF}^1_{\mcl{M}^k_{\mcl{M}}}$. Now $[1:bt']\in \mbb{PF}^1_{\mcl{M}^k}$ maps onto the element $[1:\frac bt]\in \mbb{PF}^1_{\mcl{M}^k_{\mcl{M}}}$. This proves surjectivity. The map is clearly injective as well. Hence the proposition follows.
\end{proof}
\begin{prop}
\label{prop:ProjSpaceElement}
Let $\mcl{O}$ be a Dedekind domain. Let $M\sbnq \mcl{O}^2$ be a rank two co-torsion module. Let $\frac{\mcl{O}^2}{M}\cong \frac{\mcl{O}}{\mcl{L}}\oplus\frac{\mcl{O}}{\mcl{K}}$ with $\mcl{L}\supseteq \mcl{K}$
the invariant factor ideals as in Definition~\ref{defn:Invariants}. Then 
\begin{enumerate}
\item $\mcl{L}\oplus \mcl{L}\supseteq M$.
\item If $\mcl{M}_i,1\leq i\leq r$ are the elementary maximal ideal divisor invariants of $M$ as in Definition~\ref{defn:Invariants}, then exists $t\in \mcl{L}\bs \big(\us{i=1}{\os{r}{\cup}}\mcl{L}\mcl{M}_i\big),a,b\in \mcl{O}$ with $\langle a\rangle +\langle b\rangle =\mcl{O}$ such that $(ta,tb)\in M$.
\end{enumerate}
\end{prop}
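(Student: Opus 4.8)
The plan is to reduce both statements to local computations at the finitely many maximal ideals $\mcl{M}_1,\dots,\mcl{M}_r$, using Proposition \ref{prop:LocalIntersectionModule}, and then to use the structure theorem for finitely generated modules over the discrete valuation rings $\mcl{O}_{\mcl{M}_i}$. For part (1), it suffices to show $M_{\mcl{M}}\subseteq (\mcl{L}\oplus\mcl{L})_{\mcl{M}}$ for every maximal ideal $\mcl{M}$, since an $\mcl{O}$-submodule of $\mcl{O}^2$ equals the intersection of its localizations. If $\mcl{M}\notin\{\mcl{M}_1,\dots,\mcl{M}_r\}$ then $M_{\mcl{M}}=\mcl{O}^2_{\mcl{M}}$ and $\mcl{L}_{\mcl{M}}=\mcl{O}_{\mcl{M}}$, so there is nothing to prove. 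If $\mcl{M}=\mcl{M}_i$, write $\pi$ for a uniformizer of the DVR $\mcl{O}_{\mcl{M}_i}$; then $(\mcl{O}^2/M)_{\mcl{M}_i}\cong \mcl{O}_{\mcl{M}_i}/\pi^{l_i}\oplus\mcl{O}_{\mcl{M}_i}/\pi^{k_i}$, so by the elementary divisor theorem over a DVR there is an $\mcl{O}_{\mcl{M}_i}$-basis $\{e_1,e_2\}$ of $\mcl{O}^2_{\mcl{M}_i}$ with $M_{\mcl{M}_i}=\pi^{l_i}\mcl{O}_{\mcl{M}_i}e_1\oplus\pi^{k_i}\mcl{O}_{\mcl{M}_i}e_2$. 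Since $l_i\le k_i$ and $\mcl{L}_{\mcl{M}_i}=\pi^{l_i}\mcl{O}_{\mcl{M}_i}$, we get $M_{\mcl{M}_i}\subseteq \pi^{l_i}\mcl{O}^2_{\mcl{M}_i}=(\mcl{L}\oplus\mcl{L})_{\mcl{M}_i}$, which is exactly what is needed.

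For part (2), the element $t$ must lie in $\mcl{L}$ but avoid $\mcl{L}\mcl{M}_i$ for every $i$; the first bullet of the preceding Remark (with $\mcl{F}=\{\mcl{M}_1,\dots,\mcl{M}_r\}$ and $\mcl{I}=\mcl{L}$) guarantees the set $\mcl{L}\setminus\bigcup_{i=1}^r\mcl{L}\mcl{M}_i$ is nonempty, so such $t$ exist; the real content is producing $a,b$ with $\langle a\rangle+\langle b\rangle=\mcl{O}$ and $(ta,tb)\in M$ for a suitable such $t$. I would first find, for each $i$, the local answer: in the basis $\{e_1,e_2\}$ above, the element $\pi^{l_i}e_1\in M_{\mcl{M}_i}$ is a generator of $M_{\mcl{M}_i}$ modulo $\pi^{l_i}$ times a unimodular vector — more precisely, writing $e_1=(\alpha_i,\beta_i)$ in the standard coordinates, the pair $(\alpha_i,\beta_i)$ is unimodular in $\mcl{O}^2_{\mcl{M}_i}$ and $(\pi^{l_i}\alpha_i,\pi^{l_i}\beta_i)\in M_{\mcl{M}_i}$. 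Then I would use the Chinese Remainder Theorem / strong approximation across the finitely many primes $\mcl{M}_i$ (together with the local-global description of $M$ in Proposition \ref{prop:LocalIntersectionModule}) to choose a single global pair $(a,b)\in\mcl{O}^2$ with $\langle a\rangle+\langle b\rangle=\mcl{O}$ whose image in $\mcl{O}^2_{\mcl{M}_i}$ agrees, modulo a high power of $\mcl{M}_i$, with $(\alpha_i,\beta_i)$ up to a unit, and simultaneously choose $t\in\mcl{L}\setminus\bigcup_i\mcl{L}\mcl{M}_i$ so that at each $\mcl{M}_i$ the valuation of $t$ is exactly $l_i$. One then checks $(ta,tb)\in M_{\mcl{M}}$ for all maximal $\mcl{M}$: at $\mcl{M}=\mcl{M}_i$ this holds because $(ta,tb)$ and $\pi^{l_i}(\alpha_i,\beta_i)$ differ by an element of $\mcl{M}_i^{k_i}\mcl{O}^2_{\mcl{M}_i}\subseteq M_{\mcl{M}_i}$ once the approximation is taken to precision $\ge k_i-l_i$; at $\mcl{M}\notin\{\mcl{M}_i\}$ it holds automatically since $M_{\mcl{M}}=\mcl{O}^2_{\mcl{M}}$. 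Hence $(ta,tb)\in\bigcap_{\mcl{M}}M_{\mcl{M}}=M$.

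I expect the main obstacle to be the simultaneous global choice of $t$ and $(a,b)$: one has to match up the local data at all of $\mcl{M}_1,\dots,\mcl{M}_r$ at once while keeping $(a,b)$ unimodular globally and keeping $t$ outside every $\mcl{L}\mcl{M}_i$. The unimodularity is the delicate point — a naive CRT choice of $(a,b)$ could be divisible by some maximal ideal not among the $\mcl{M}_i$, so one should either clear this by a further approximation argument (choosing $a,b$ to be, say, congruent to $1$ and $0$ respectively modulo an auxiliary prime, or invoking that over a Dedekind domain any pair that is unimodular locally at a finite set of primes can be adjusted to a globally unimodular pair in the same local classes) or work instead with the ideal $\langle a\rangle+\langle b\rangle$ and argue it must be all of $\mcl{O}$ by comparing supports. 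The rest is bookkeeping with valuations and the elementary divisor normal form over each DVR.
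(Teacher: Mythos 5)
Your proof of part (1) is the paper's argument verbatim: localize, note $M_{\mcl{M}}=\mcl{O}^2_{\mcl{M}}$ away from the $\mcl{M}_i$, and use the elementary divisor form over the DVR $\mcl{O}_{\mcl{M}_i}$ at each $\mcl{M}_i$ (the paper phrases this as an appeal to Theorem~\ref{theorem:ProjInvariantPID}). For part (2) your strategy is also the paper's: extract a local unimodular pair $(\ga_i,\gb_i)$ with $\gp^{l_i}(\ga_i,\gb_i)\in M_{\mcl{M}_i}$, glue across the finitely many primes, and verify membership locally, taking $t=\prod p_i^{l_i}$ with each $p_i\in\mcl{M}_i\bs\bigcup_j\mcl{M}_i\mcl{M}_j$. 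The one step you leave open --- producing a \emph{globally} unimodular $(a,b)$ realizing the prescribed local classes --- is exactly what the paper's machinery is built to handle: Proposition~\ref{prop:ProjSpaceBijection} identifies $\mbb{PF}^1_{(\mcl{M}_i^{k_i-l_i})_{\mcl{M}_i}}$ with $\mbb{PF}^1_{\mcl{M}_i^{k_i-l_i}}$ (so each local class is already represented by a globally unimodular pair $(a_i,b_i)$), and the surjectivity of the Chinese remainder reduction map $\mbb{PF}^1_{\mcl{I}}\ra\prod_i\mbb{PF}^1_{\mcl{M}_i^{k_i-l_i}}$ (Theorem 1.6 of \cite{MR3887364}) then produces a single globally unimodular $(a,b)$ matching all of them at once. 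That is precisely the second of the two remedies you sketch, so your proposal is correct in outline but relies on this nontrivial surjectivity statement, which you should cite or prove rather than leave as a remark; the rest of your local verification (differences landing in $\mcl{M}_i^{k_i}\mcl{O}^2_{\mcl{M}_i}\sbq M_{\mcl{M}_i}$) is sound and matches the paper's use of Proposition~\ref{prop:LocalIntersectionModule} to conclude $(ta,tb)\in M$.
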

\begin{proof}
We prove $(1)$ first. For a maximal $\mcl{M}\neq \mcl{M}_i,1\leq i\leq r$ we have $(\mcl{L}\oplus \mcl{L})_{\mcl{M}}=\mcl{L}_{\mcl{M}} \oplus \mcl{L}_{\mcl{M}} = \mcl{O}_\mcl{M} \oplus \mcl{O}_\mcl{M}$. Similarly we have $M_{\mcl{M}}=\mcl{O}^2_{\mcl{M}}$ since $\big(\frac{\mcl{O}^2}{M}\big)_{\mcl{M}}=0$.
For $\mcl{M}=\mcl{M}_i$ we use Theorem~\ref{theorem:ProjInvariantPID} because of the additional fact that $\mcl{O}_{\mcl{M}}$ is a discrete valuation ring and hence a principal ideal domain. For $\mcl{M}=\mcl{M}_i$, let $\mcl{M}_{\mcl{M}}=\langle p_i \rangle$. Then $(\mcl{L}\oplus \mcl{L})_{\mcl{M}}=\mcl{L}_{\mcl{M}} \oplus \mcl{L}_{\mcl{M}}=\langle p_i^{l_i}\rangle\oplus \langle p_i^{l_i}\rangle$. Using Theorem~\ref{theorem:ProjInvariantPID}	we have that there exists a basis of $M_{\mcl{M}}$ of the form $\{(p_i^{l_i}x,p_i^{l_i}y),(p_i^{k_i}z,p_i^{k_i}w)\}$ with $\mattwo xyzw\in GL_2(\mcl{O}_{\mcl{M}})$. Hence we obtain $(\mcl{L}\oplus \mcl{L})_{\mcl{M}} \supseteq M_{\mcl{M}}$. So we get 
\equ{\mcl{L}\oplus \mcl{L}=\us{\mcl{M}\in MaxSpec(\mcl{O})}{\bigcap} \big(\mcl{L}\oplus \mcl{L}\big)_{\mcl{M}}
\supseteq \us{\mcl{M}\in MaxSpec(\mcl{O})}{\bigcap} M_{\mcl{M}} = M.}
This proves $(1)$. 

Now we prove $(2)$. For $\mcl{M}=\mcl{M}_i$, choose $p_i\in \mcl{M}_i\bs \bigg(\us{j=1}{\os{r}{\bigcup}}\mcl{M}_i\mcl{M}_j\bigg)$. Then we have $(\mcl{M}_i)_{\mcl{M}_i}=\langle p_i\rangle$ a principal ideal in $\mcl{O}_{\mcl{M}_i}$. Now there exist $x_i,y_i\in \mcl{O}_{\mcl{M}_i}$ such that $\langle x_i \rangle +\langle y_i \rangle=\mcl{O}_{\mcl{M}_i}$ and $(p_i^{l_i}x_i,p_i^{l_i}y_i)\in M_{\mcl{M}_i}$. $[x_i:y_i]$ defines an element in $\mbb{PF}^1_{(\mcl{M}_i^{k_i-l_i})_{\mcl{M}_i}}$. By using Proposition~\ref{prop:ProjSpaceBijection} there exist $a_i,b_i\in \mcl{O}$ with $\langle a_i\rangle +\langle b_i\rangle =\mcl{O}$ and $[a_i:b_i]=[x_i:y_i]\in \mbb{PF}^1_{(\mcl{M}_i^{k_i-l_i})_{\mcl{M}_i}}$ and $[a_i:b_i]$ also defines an element in $\mbb{PF}^1_{\mcl{M}_i^{k_i-l_i}}$. Using the surjectivity of the following chinese remainder reduction map (Theorem $1.6$ in C.P. Anil Kumar~\cite{MR3887364})
\equ{\mbb{PF}^1_{\mcl{I}}\lra \us{i=1}{\os{r}{\prod}}\mbb{PF}^1_{\mcl{M}_i^{k_i-l_i}}}
where $\mcl{I}=\us{i=1}{\os{r}{\prod}}\mcl{M}_{i}^{k_i-l_i}$ the projective space invariant ideal of $M$ as in Definition~\ref{defn:Invariants}, we obtain that there exist $a,b\in \mcl{O}$ with $\langle a\rangle +\langle b\rangle=\mcl{O}$ and $[a:b]=[a_i:b_i]\in \mbb{PF}^1_{\mcl{M}_i^{k_i-l_i}} \Ra [a:b]=[x_i:y_i]\in \mbb{PF}^1_{(\mcl{M}_i^{k_i-l_i})_{\mcl{M}_i}}$. So using Theorem~\ref{theorem:ProjInvariantPID} we obtain that $p_i^{l_i}(a,b)\in M_{\mcl{M}_i}$ for $1\leq i\leq r$. Hence we have the element
\equ{\bigg(a\us{i=1}{\os{r}{\prod}}p_i^{l_i},b\us{i=1}{\os{r}{\prod}}p_i^{l_i}\bigg)\in \us{i=1}{\os{r}{\bigcap}} M_{\mcl{M}_i}.}
Hence by Proposition~\ref{prop:LocalIntersectionModule} it belongs to $M$. By the choice of $p_i$ we have 
$t=\us{i=1}{\os{r}{\prod}}p_i^{l_i} \in \mcl{L}\bs\big(\us{i=1}{\os{r}{\cup}}\mcl{L}\mcl{M}_i\big)$. This proves the proposition.
\end{proof}
\section{\bf{Proof of the Main Theorems and Consequences}}
We prove main Theorems~[\ref{theorem:ProjectiveInvariant},\ref{theorem:ProjectiveInvariantSurj}] of the article in this section.
First we prove Theorem~\ref{theorem:ProjectiveInvariant}.
\begin{proof}
Proposition~\ref{prop:ProjSpaceElement} define a projective space element $[a:b]\in \mbb{PF}^1_{\mcl{I}}$
where $(ta,tb)\in M$ for some $t\in \mcl{L}\bs\big(\us{i=1}{\os{r}{\cup}}\mcl{L}\mcl{M}_i\big)$.
If $(sc,sd)\in M$ for some $s\in \mcl{L}\bs\big(\us{i=1}{\os{r}{\cup}}\mcl{L}\mcl{M}_i\big)$ with $[c:d]\in \mbb{PF}^1_{\mcl{I}}$ then we get that $(sc,sd)\in M_{\mcl{M}_i},1\leq i\leq r$ such that in $\mcl{O}_{\mcl{M}_i}$ we have $\langle s \rangle=\langle t\rangle = \langle p_i^{l_i} \rangle$ and $[c:d]=[a:b]\in \mbb{PF}^1_{(\mcl{M}_i^{k_i-l_i})_{\mcl{M}_i}}$. Now using bijectivity in  Proposition~\ref{prop:ProjSpaceBijection} we have $[c:d]=[a:b]\in \mbb{PF}^1_{\mcl{M}_i^{k_i-l_i}},1\leq i\leq r$. Using bijectivity again of the chinese remainder reduction map we have $[c:d]=[a:b]\in \mbb{PF}^1_{\mcl{I}}$. So the element $[a:b]\in \mbb{PF}^1_{\mcl{I}}$ is uniquely determined for the module $M$.

Conversely if the invariant ideals $\mcl{L}\supseteq \mcl{K}$ are given for a module $M$ such that $\frac{\mcl{O}^2}{M}\cong \frac{\mcl{O}}{\mcl{L}}\oplus \frac{\mcl{O}}{\mcl{K}}$ and $\mcl{L}=\us{i=1}{\os{r}{\prod}}\mcl{M}_i^{l_i},
\mcl{K}=\us{i=1}{\os{r}{\prod}}\mcl{M}_i^{k_i},\mcl{I}=\us{i=1}{\os{r}{\prod}}\mcl{M}_i^{k_i-l_i}$ then for $a,b\in \mcl{O}$ with $\langle a \rangle + \langle b \rangle =\mcl{O}$ we have that the element 
$[a:b]\in \mbb{PF}^1_{\mcl{I}}$ determines elements $[a:b]\in \mbb{PF}^1_{\mcl{M}_i^{k_i-l_i}},1\leq i\leq r$ and hence elements in $\mbb{PF}^1_{(\mcl{M}_i^{k_i-l_i})_{\mcl{M}_i}},1 \leq i\leq r$ using Proposition~\ref{prop:ProjSpaceBijection}. Now using the fact that $\mcl{O}_{\mcl{M}_i},1\leq i\leq r$ are DVRs and hence PIDs and using Theorem~\ref{theorem:ProjInvariantPID}, we have that the module $M_{\mcl{M}_i}\subseteq \mcl{O}^2_{\mcl{M}_i}$ is uniquely determined by $\mcl{L}_{\mcl{M}_i}\supseteq \mcl{K}_{\mcl{M}_i}$ and $[a:b]\in \mbb{PF}^1_{(\mcl{M}_i^{k_i-l_i})_{\mcl{M}_i}},1 \leq i\leq r$. Now using Proposition~\ref{prop:LocalIntersectionModule}, the module $M$ is uniquely determined. For that uniquely determined module $M$, for any $t\in \mcl{L}\bs\big(\us{i=1}{\os{r}{\cup}}\mcl{L}\mcl{M}_i\big)$ we moreover have $(ta,tb)\in M$. This proves the first main theorem.
\end{proof}
Now we prove the second main Theorem~\ref{theorem:ProjectiveInvariantSurj} of the article.
\begin{proof}
Let $\mcl{L}=\mcl{M}_1^{l_1}\mcl{M}_2^{l_2}\ldots \mcl{M}_r^{l_r} \subseteq \mcl{O},\mcl{K}=\mcl{M}_1^{k_1}\mcl{M}_2^{k_2}\ldots \mcl{M}_r^{k_r}\sbnq \mcl{O}$ with $0\leq l_i\leq k_i\neq 0$. Then $\mcl{I}=\mcl{M}_1^{k_1-l_1}\mcl{M}_2^{k_2-l_2}\ldots \mcl{M}_r^{k_r-l_r}\subseteq \mcl{O}$.	Let $p_i\in \mcl{M}_i\bs \bigg(\us{j=1}{\os{r}{\bigcup}}\mcl{M}_i\mcl{M}_j\bigg),1\leq i\leq r$. Since $[a:b]\in \mbb{PF}^1_{\mcl{I}}$ with $\langle a \rangle +\langle b\rangle =\mcl{O}$, let $ax-by=1$ for some $x,y\in \mcl{O}$. Let $M_i\sbnq \mcl{O}^2_{\mcl{M}_i}$ be the unique co-torsion $\mcl{O}_{\mcl{M}_i}$-module of rank two with invariant ideals $\mcl{L}_{\mcl{M}_i}=(\mcl{M}_i^{l_i})_{\mcl{M}_i} \supseteq \mcl{K}_{\mcl{M}_i}=(\mcl{M}_i^{k_i})_{\mcl{M}_i}$ and projective space invariant element $[a:b]\in \mbb{PF}^1_{(\mcl{M}_i^{k_i-l_i})_{\mcl{M}_i}}$ using Theorem~\ref{theorem:ProjInvariantPID} for principal ideal domains. Then choose the module $M\subseteq S^{-1}\mcl{O}\oplus S^{-1}\mcl{O}$ where $S=\mcl{O}\bs\{0\}$ as given by 
\equ{M=M_1\cap M_2 \cap \ldots \cap M_r \us{\mcl{M}\neq \mcl{M}_i,1\leq i\leq r, \mcl{M}-\text{maximal}}{\bigcap}\mcl{O}^2_{\mcl{M}}.}
First we have $M\subseteq \mcl{O}^2$. This follows because $M\subseteq \us{\mcl{M}-\text{maximal}}{\bigcap}\mcl{O}^2_{\mcl{M}}=\mcl{O}^2$. Next we prove that $M_{\mcl{M}_i}=M_i,1\leq i\leq r$. Clearly $M\subseteq M_i$ and $M_i$ is an $\mcl{O}_{\mcl{M}_i}$ module. Hence $M_{\mcl{M}_i}\subseteq M_i,1\leq i\leq r$.
For any $1\leq j\leq r$ let $(u,v)\in M_j$. Because of the choice of $M_j$, this $\mcl{O}_{\mcl{M}_j}$-module has basis \equ{\{m_1=\big(a\us{i=1}{\os{r}{\prod}}p_i^{l_i},b\us{i=1}{\os{r}{\prod}}p_i^{l_i}\big),m_2=\big(y\us{i=1}{\os{r}{\prod}}p_i^{k_i},x\us{i=1}{\os{r}{\prod}}p_i^{k_i}\big)\}.}
Note that $p_i\nin \mcl{M}_j$ for $1\leq i\neq j\leq r$.
So there exists $\ga,\gb\in \mcl{O}_{\mcl{M}_j}$ such that $(u,v)=\ga m_1+\gb m_2$.
Let $\ga=\frac es,\gb =\frac fs$ where $e,f\in \mcl{O},s\in \mcl{O}\bs\mcl{M}_j$ then $s(u,v)=em_1+fm_2$. Now we observe that 
\equ{em_1+fm_2\in \mcl{O}^2\cap M_1\cap M_2\cap \ldots \cap M_j\cap \ldots \cap M_r \subseteq M.}
So $(u,v)\in M_{\mcl{M}_j} \Ra M_j\subseteq M_{\mcl{M}_j}$. This proves that $M_{\mcl{M}_j}=M_j,1\leq j\leq r$. 

Let $\mcl{M} \neq \mcl{M}_i,1\leq i \leq r$ be a maximal ideal. Let \equ{q_i\in \mcl{M}_i\bs \bigg(\us{j=1}{\os{r}{\bigcup}}\mcl{M}_i\mcl{M}_j\bigcup \mcl{M}_i\mcl{M}\bigg),1\leq i\leq r.}
Then the elements $n_1,n_2\in M$ where 
\equ{n_1=\big(a\us{i=1}{\os{r}{\prod}}q_i^{l_i},b\us{i=1}{\os{r}{\prod}}q_i^{l_i}\big),n_2=\big(y\us{i=1}{\os{r}{\prod}}q_i^{k_i},x\us{i=1}{\os{r}{\prod}}q_i^{k_i}\big).}
We also observe that 
\equ{\Det \mattwo{a\us{i=1}{\os{r}{\prod}}q_i^{l_i}}{b\us{i=1}{\os{r}{\prod}}q_i^{l_i}}{y\us{i=1}{\os{r}{\prod}}q_i^{k_i}}{x\us{i=1}{\os{r}{\prod}}q_i^{k_i}}=\us{i=1}{\os{r}{\prod}}q_i^{l_i+k_i}} which is a unit in $\mcl{O}_{\mcl{M}}$. Hence 
\equ{\matcoltwo{\lla n_1\lra}{\lla n_2\lra} = \mattwo{a\us{i=1}{\os{r}{\prod}}q_i^{l_i}}{b\us{i=1}{\os{r}{\prod}}q_i^{l_i}}{y\us{i=1}{\os{r}{\prod}}q_i^{k_i}}{x\us{i=1}{\os{r}{\prod}}q_i^{k_i}}\in GL_2(\mcl{O}_{\mcl{M}}).} 
This proves that $M_{\mcl{M}}=\mcl{O}^2_{\mcl{M}}$. 

So this module $M$ has exactly invariants $\mcl{L}\supseteq \mcl{K}$. Also the projective space invariant element is $[a:b]\in \mbb{PF}^1_{\mcl{I}}$ which can be checked by localization. Hence the second main theorem follows.
\end{proof}
As a consequence of main Theorems~[\ref{theorem:ProjectiveInvariant},\ref{theorem:ProjectiveInvariantSurj}] we have the following theorem which we state without proof.
\begin{theorem}[Bijection/Enumeration Theorem]
	\label{theorem:Bijection}
	Let $\mcl{O}$ be a Dedekind domain. Then there is a bijection of the set of co-torsion submodules (of rank two) in $\mcl{O}^2$ having fixed invariant factor ideals $\mcl{L}\supseteq \mcl{K}$ with the projective space $\mbb{PF}^1_{\mcl{I}}$ where we have the ideal factorization $\mcl{K}=\mcl{L}\mcl{I}$.
\end{theorem}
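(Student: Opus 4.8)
The plan is to obtain the bijection by assembling the pieces already in place, so that no genuinely new argument is required beyond bookkeeping. Fix ideals $\mcl{L}\supseteq \mcl{K}\sbnq \mcl{O}$ and let $\mcl{I}$ be the ideal determined by $\mcl{K}=\mcl{L}\mcl{I}$. Let $\mcl{S}$ be the set of rank two co-torsion submodules $M\sbnq \mcl{O}^2$ whose invariant factor ideals are exactly $\mcl{L}\supseteq \mcl{K}$; since for such an $M$ the quotient $\mcl{O}^2/M$ is a finitely generated torsion module over a Dedekind domain, its invariant factor ideals are well defined, so $\mcl{S}$ is an honestly defined set. First I would define a map $\Phi\colon \mcl{S}\lra \mbb{PF}^1_{\mcl{I}}$ sending $M$ to its projective space invariant element $[a:b]$ from Definition~\ref{defn:Invariants}. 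That $\Phi$ is well defined — i.e. that a representing vector $(ta,tb)\in M$ of the prescribed shape exists and that the resulting class $[a:b]\in \mbb{PF}^1_{\mcl{I}}$ is independent of all choices — is precisely the content of Proposition~\ref{prop:ProjSpaceElement} together with the uniqueness clause of Theorem~\ref{theorem:ProjectiveInvariant}.

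Next I would verify that $\Phi$ is injective and surjective. For injectivity, suppose $M_1,M_2\in \mcl{S}$ with $\Phi(M_1)=\Phi(M_2)$; then $M_1$ and $M_2$ share the same invariant factor ideals $\mcl{L}\supseteq \mcl{K}$ (hence the same $\mcl{I}$) and the same projective invariant element in $\mbb{PF}^1_{\mcl{I}}$, so Theorem~\ref{theorem:ProjectiveInvariant} forces $M_1=M_2$ as subsets of $\mcl{O}^2$. For surjectivity, let $[a:b]\in \mbb{PF}^1_{\mcl{I}}$ be arbitrary; Theorem~\ref{theorem:ProjectiveInvariantSurj} constructs a co-torsion submodule $M\sbnq \mcl{O}^2$ with invariant factor ideals $\mcl{L}\supseteq \mcl{K}$ and projective space invariant element $[a:b]$, so $M\in \mcl{S}$ and $\Phi(M)=[a:b]$. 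Hence $\Phi$ is a bijection, and the construction in the proof of Theorem~\ref{theorem:ProjectiveInvariantSurj} furnishes its inverse.

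In short, the hard part is already done: all the substance lives in Theorems~\ref{theorem:ProjectiveInvariant} and~\ref{theorem:ProjectiveInvariantSurj} and in Proposition~\ref{prop:ProjSpaceElement}, all of which are established above. The only points that deserve a sentence of care are (i) that the invariant factor ideals are genuine invariants of $M$, so that the domain $\mcl{S}$ makes sense, and (ii) that the module produced in Theorem~\ref{theorem:ProjectiveInvariantSurj} really does return the prescribed class under $\Phi$ — but this is part of the statement of that theorem, checked there by localizing at the finitely many relevant maximal ideals and invoking the principal ideal domain case Theorem~\ref{theorem:ProjInvariantPID}. As a sanity check one may note the degenerate case $\mcl{L}=\mcl{K}$: then $\mcl{I}=\mcl{O}$, the space $\mbb{PF}^1_{\mcl{O}}$ is a single point, and the theorem correctly asserts that $\mcl{L}\oplus \mcl{L}$ is the unique co-torsion module with those invariant factor ideals, in accordance with part (1) of Proposition~\ref{prop:ProjSpaceElement}.
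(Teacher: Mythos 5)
Your proposal is correct and is exactly the argument the paper intends: the paper states this theorem without proof, declaring it an immediate consequence of Theorems~\ref{theorem:ProjectiveInvariant} and~\ref{theorem:ProjectiveInvariantSurj}, and your map $\Phi$ (well defined by Proposition~\ref{prop:ProjSpaceElement} and the uniqueness clause of Theorem~\ref{theorem:ProjectiveInvariant}, injective by Theorem~\ref{theorem:ProjectiveInvariant}, surjective by Theorem~\ref{theorem:ProjectiveInvariantSurj}) is precisely that consequence spelled out.
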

Now we relate the zeta functions.
\begin{theorem}
	\label{theorem:ZetaFunction}
Let $K$ be a finite extension of $\mbb{Q}$ and $\mcl{O}_{K}$ be the ring of integers. Let $N(\mcl{I})=\mid \frac{\mcl{O}_K}{\mcl{I}}\mid$ for an ideal $0\neq \mcl{I}\subseteq \mcl{O}_K$. Define the following zeta functions.
\begin{itemize}
	\item $\zeta_{\mcl{O}_K}(s)=\us{0\neq \mcl{I}\subseteq \mcl{O}_K,\mcl{I} \text{ an ideal}}{\sum}\frac{1}{N(\mcl{I})^s}$ the Dedekind zeta function.
	\item $\zeta_{\mcl{O}^2_K}(s)=\us{M\subseteq \mcl{O}^2_K,\frac{\mcl{O}^2_K}{M}\text{ is torsion}}{\sum}\frac{1}{\bigl\rvert \frac{\mcl{O}^2_K}{M}\bigr\lvert^s}$ the zeta function of co-torsion modules in $\mcl{O}_K^2$.
	\item $\zeta^{\mcl{O}_K}_{\mbb{PF}^1}(s)=\us{0\neq \mcl{I}\subseteq \mcl{O}_K,\mcl{I} \text{ an ideal}}{\sum}\frac{\mid \mbb{PF}^1_{\mcl{I}}\mid}{N(\mcl{I})^s}$ the zeta function of the one dimensional projective spaces associated to ideals in the ring of integers of a number field $K/\mbb{Q}$.
\end{itemize} 
Then we have for $s\in \mbb{C},\operatorname{Re}(s)>2$
\begin{enumerate}
\item $\zeta_{\mcl{O}^2_K}(s)=\zeta_{\mcl{O}_K}(s-1)\zeta_{\mcl{O}_K}(s)$.
\item $\zeta_{\mcl{O}^2_K}(s)=\zeta_{\mcl{O}_K}(2s)\zeta^{\mcl{O}_K}_{\mbb{PF}^1}(s)$.
\end{enumerate}
\end{theorem}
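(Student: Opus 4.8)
The plan is to deduce both identities from the enumeration results already established, namely Theorem~\ref{theorem:Bijection}, together with the multiplicativity of the relevant arithmetic functions. First I would prove~(1) directly by parameterizing co-torsion modules $M\subseteq\mcl{O}_K^2$ by their invariant factor ideals. By the structure theory recalled in Definition~\ref{defn:Invariants}, every co-torsion $M$ determines a unique pair $\mcl{L}\supseteq\mcl{K}$ of ideals with $\frac{\mcl{O}_K^2}{M}\cong\frac{\mcl{O}_K}{\mcl{L}}\oplus\frac{\mcl{O}_K}{\mcl{K}}$, and $\bigl\lvert\frac{\mcl{O}_K^2}{M}\bigr\rvert = N(\mcl{L})N(\mcl{K})$. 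By Theorem~\ref{theorem:Bijection}, the number of $M$ with a given such pair is exactly $\lvert\mbb{PF}^1_{\mcl{I}}\rvert$ where $\mcl{K}=\mcl{L}\mcl{I}$. I would then reorganize the sum instead over the pair of ideals $(\mcl{L},\mcl{I})$ with $\mcl{K}=\mcl{L}\mcl{I}$, so that
\equ{\zeta_{\mcl{O}_K^2}(s)=\us{\mcl{L},\mcl{I}}{\sum}\frac{\lvert\mbb{PF}^1_{\mcl{I}}\rvert}{\bigl(N(\mcl{L})N(\mcl{L}\mcl{I})\bigr)^s}=\us{\mcl{L}}{\sum}\frac{1}{N(\mcl{L})^{2s}}\us{\mcl{I}}{\sum}\frac{\lvert\mbb{PF}^1_{\mcl{I}}\rvert}{N(\mcl{I})^s}=\zeta_{\mcl{O}_K}(2s)\,\zeta^{\mcl{O}_K}_{\mbb{PF}^1}(s),}
which is exactly~(2). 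So~(2) is essentially immediate once the bijection is in place, the only point to check being absolute convergence for $\operatorname{Re}(s)>2$, which follows from the crude bound $\lvert\mbb{PF}^1_{\mcl{I}}\rvert\le N(\mcl{I})\prod_{\mcl{M}\mid\mcl{I}}(1+N(\mcl{M})^{-1})\le C_\epsilon N(\mcl{I})^{1+\epsilon}$ so that $\zeta^{\mcl{O}_K}_{\mbb{PF}^1}(s)$ converges for $\operatorname{Re}(s)>2$ and $\zeta_{\mcl{O}_K}(2s)$ for $\operatorname{Re}(s)>\tfrac12$.

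For~(1), the cleanest route is to compute $\zeta^{\mcl{O}_K}_{\mbb{PF}^1}(s)$ explicitly via an Euler product and thereby reduce to~(2); equivalently one proves~(1) first and then reads off~(2). The key local computation is the formula $\lvert\mbb{PF}^1_{\mcl{M}^k}\rvert=N(\mcl{M})^{k-1}(N(\mcl{M})+1)$ for a maximal ideal $\mcl{M}$ and $k\ge1$ (with value $1$ for $k=0$), which is the Dedekind-domain analogue of the count $\#(\mbb{PF}^1_{p^k})=p^{k-1}(p+1)$ used in the proof of Theorem~\ref{theorem:NumberofSubgroupsofFiniteIndex}; this can be verified from the definition of $\mbb{PF}^1$ or from Proposition~\ref{prop:ProjSpaceBijection} after reducing to the local ring $\mcl{O}_{K,\mcl{M}}$. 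Combined with the Chinese-remainder bijection $\mbb{PF}^1_{\mcl{I}}\xrightarrow{\sim}\prod_i\mbb{PF}^1_{\mcl{M}_i^{a_i}}$ for $\mcl{I}=\prod_i\mcl{M}_i^{a_i}$ (Theorem~$1.6$ in~\cite{MR3887364}), the function $\mcl{I}\mapsto\lvert\mbb{PF}^1_{\mcl{I}}\rvert$ is multiplicative, so $\zeta^{\mcl{O}_K}_{\mbb{PF}^1}(s)$ has an Euler product whose $\mcl{M}$-factor is
\equ{\us{k\ge0}{\sum}\frac{\lvert\mbb{PF}^1_{\mcl{M}^k}\rvert}{N(\mcl{M})^{ks}}=1+\us{k\ge1}{\sum}\frac{N(\mcl{M})^{k-1}(N(\mcl{M})+1)}{N(\mcl{M})^{ks}}=\frac{1-N(\mcl{M})^{-s}+N(\mcl{M})^{-s}\bigl(1+N(\mcl{M})^{-1}\bigr)N(\mcl{M})}{\bigl(1-N(\mcl{M})^{-(s-1)}\bigr)\bigl(1-N(\mcl{M})^{-s}\bigr)}\cdot\frac{1}{?}}
— more simply, summing the geometric series gives the $\mcl{M}$-factor $\dfrac{1}{\bigl(1-N(\mcl{M})^{-(s-1)}\bigr)\bigl(1-N(\mcl{M})^{-s}\bigr)}\cdot\bigl(1-N(\mcl{M})^{-2s}\bigr)$, and taking the product over all $\mcl{M}$ yields $\zeta^{\mcl{O}_K}_{\mbb{PF}^1}(s)=\zeta_{\mcl{O}_K}(s-1)\zeta_{\mcl{O}_K}(s)/\zeta_{\mcl{O}_K}(2s)$. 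Substituting this into~(2) gives~(1). Alternatively, one can obtain~(1) with no projective-space input at all, by noting $N\bigl(\tfrac{\mcl{O}_K^2}{M}\bigr)=N(\mcl{L})N(\mcl{K})$ and that the map $M\mapsto(\mcl{L},\mcl{K})$ realizes the Dirichlet series as $\sum_{\mcl{L}\supseteq\mcl{K}}N(\mcl{L})^{-s}N(\mcl{K})^{-s}$ summed with multiplicity one — but this over-counts unless one knows each pair is hit exactly $\lvert\mbb{PF}^1_{\mcl{I}}\rvert$ times, so in the end Theorem~\ref{theorem:Bijection} is unavoidable.

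I expect the only real obstacle to be bookkeeping: getting the local Euler factor algebra right (the telescoping of $\sum_k N(\mcl{M})^{k-1}(N(\mcl{M})+1)x^k$ with $x=N(\mcl{M})^{-s}$ into $\dfrac{1-x^2}{(1-N(\mcl{M})x)(1-x)}$, equivalently $\dfrac{1+x}{1-N(\mcl{M})x}$, wait — one must be careful) and confirming the half-plane of convergence so that the rearrangements of the double series and the formation of the Euler product are legitimate. Everything else — the structure theorem, the enumeration bijection, the Chinese-remainder product decomposition, and the local point count — is already available in the excerpt, so the proof is a short assembly once those pieces are cited.
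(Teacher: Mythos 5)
Your proposal is correct and follows essentially the same route as the paper: part (2) is obtained by exactly the same rearrangement of the double sum over pairs $(\mcl{L},\mcl{I})$ using the enumeration bijection and $\bigl\lvert\frac{\mcl{O}_K^2}{M}\bigr\rvert=N(\mcl{L})N(\mcl{K})$, and part (1) comes from the local count $\lvert\mbb{PF}^1_{\mcl{M}^k}\rvert=N(\mcl{M})^{k-1}(N(\mcl{M})+1)$ together with the Chinese remainder bijection and an Euler product. The only (immaterial) difference is that you compute the Euler factors of $\zeta^{\mcl{O}_K}_{\mbb{PF}^1}$ and divide, whereas the paper computes the Euler factors of $\zeta_{\mcl{O}_K^2}$ directly; your local factor $\frac{1-N(\mcl{M})^{-2s}}{(1-N(\mcl{M})^{-(s-1)})(1-N(\mcl{M})^{-s})}$ is the correct resolution of the algebra you flagged as needing care.
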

\begin{proof}
Here we use the fact for a number field $N(\mcl{I})$ is finite for $0\neq \mcl{I}\subseteq \mcl{O}_K$. We also use the fact that for any integer $n>0$ the set of ideals $0\neq \mcl{I}\subseteq \mcl{O}_K$ such that $N(\mcl{I})=n$ is a finite set and the Dedekind zeta function $\zeta_{\mcl{O}_K}(s)$ coverges for $s\in \mbb{C},\operatorname{Re}(s)>1$.

First we observe that for a maximal ideal $\mcl{M}\sbnq \mcl{O}_K,k\in \N,\gp\in \mcl{M}\bs \mcl{M}^2$ we have 
\equ{\mid \mbb{PF}^1_{\mcl{M}^k}\mid =N(\mcl{M})^k+N(\mcl{M})^{k-1}.}
This follows because \equa{\mbb{PF}^1_{\mcl{M}^k}&=\bigg\{[1:\gp^tu]\mid \ol{u}\in \mcl{U}(\frac{\mcl{O}_K}{\mcl{M}^{k-t}}),0\leq t\leq (k-1)\bigg\}\\ &\bigcup\bigg\{[\gp^tu:1]\mid \ol{u}\in \mcl{U}(\frac{\mcl{O}_K}{\mcl{M}^{k-t}}),0< t\leq (k-1)\bigg\}\\ &\bigcup\bigg\{[1:0],[0:1]\bigg\}.}
Since the Chinese remainder reduction map 
\equ{\mbb{PF}^1_{\mcl{I}}\lra \us{i=1}{\os{r}{\prod}}\mbb{PF}^1_{\mcl{M}_i^{k_i}}}
is bijective for $\mcl{I}=\us{i=1}{\os{r}{\prod}} \mcl{M}_i^{k_i}\subseteq \mcl{O}_K$ by Theorem $1.6$ in C.P. Anil Kumar~\cite{MR3887364}, we have 
\equ{\mid \mbb{PF}^1_{\mcl{I}}\mid = \us{i=1}{\os{r}{\prod}}\big(N(\mcl{M}_i)^{k_i}+N(\mcl{M}_i)^{k_i-1}\big).}
We prove $(2)$ first.	
\equa{\zeta_{\mcl{O}_K}(2s)\zeta^{\mcl{O}_K}_{\mbb{PF}^1}(s)&=\us{0\neq \mcl{I},\mcl{L}\subseteq \mcl{O}_K,\mcl{I},\mcl{L} \text{ two ideals}}{\sum}\frac{\mid \mbb{PF}^1_{\mcl{I}}\mid}{N(\mcl{I}\mcl{L}^2)^{s}}\\
&=\us{0\neq \mcl{K},\mcl{L}\subseteq \mcl{O}_K,\mcl{K}\subseteq \mcl{L} \text{ two ideals}}{\sum}\frac{\mid \mbb{PF}^1_{\mcl{I}}\mid}{N(\mcl{K})^s N(\mcl{L})^{s}} \text{ where }\mcl{K}=\mcl{L}\mcl{I}\\
&=\us{M\subseteq \mcl{O}^2_K,\frac{\mcl{O}^2_K}{M}\text{ is torsion}}{\sum}\frac{1}{\bigl\rvert \frac{\mcl{O}^2_K}{M}\bigr\lvert^s} \text{ because }\bigl\rvert \frac{\mcl{O}^2_K}{M}\bigr\lvert=N(\mcl{L})N(\mcl{K})\\
&=\zeta_{\mcl{O}^2_K}(s).}
Now we prove similarly $(1)$.
\equa{\zeta_{\mcl{O}^2_K}(s)&=\us{M\subseteq \mcl{O}^2_K,\frac{\mcl{O}^2_K}{M}\text{ is torsion}}{\sum}\frac{1}{\bigl\rvert \frac{\mcl{O}^2_K}{M}\bigr\lvert^s}\\
&=\us{0\neq \mcl{K},\mcl{L}\subseteq \mcl{O}_K,\mcl{K}\subseteq \mcl{L} \text{ two ideals}}{\sum}\frac{\mid \mbb{PF}^1_{\mcl{I}}\mid}{N(\mcl{K})^s N(\mcl{L})^{s}} \text{ where }\mcl{K}=\mcl{L}\mcl{I}\\
&=\us{\mcl{M}\in \text{MaxSpec}(\mcl{O}_K)}{\prod}\bigg(\us{k\geq 0}{\sum}\frac{\big(N(\mcl{M})^k+N(\mcl{M})^{k-1}+\ldots+1\big)}{N(\mcl{M})^{ks}}\bigg)\\
&\text{ by rearranging terms similar to the case of integers }\Z\\
&=\us{\mcl{M}\in \text{MaxSpec}(\mcl{O}_K)}{\prod}\bigg(\frac{1}{1-\frac 1{N(\mcl{M})^{s-1}}}\bigg)\bigg(\frac{1}{1-\frac 1{N(\mcl{M})^s}}\bigg)\\
&=\zeta_{\mcl{O}_K}(s-1)\zeta_{\mcl{O}_K}(s).}
This completes the proof.
\end{proof}
Now we state another consequence of the main theorem.
\begin{theorem}
	\label{theorem:IntersectionModules}
Let $\mcl{O}$ be a Dedekind domain. Let $\mcl{F}$ be a finite set of maximal ideals in $\mcl{O}$. Let $M_i,1\leq i\leq l$ be finitely many co-torsion submodules of rank two in $\mcl{O}^2$ such that their invariant factor ideals are $\mcl{L}_i \supseteq \mcl{K}_i$ which are co-maximal that is $\mcl{K}_i+\mcl{K}_j=\mcl{O}$ for $1\leq i\neq j\leq l$. Let $\mcl{I}_i,1\leq i\leq l$ be ideals in $\mcl{O}$ such that $\mcl{K}_i=\mcl{L}_i\mcl{I}_i,1\leq i\leq r$. Moreover assume that if a maximal ideal $\mcl{M} \supseteq \mcl{K}_i$ for some $1\leq i\leq r$ then $\mcl{M}\in \mcl{F}$. For $1\leq i\leq l$, let $[a_i:b_i]\in \mbb{PF}^1_{\mcl{I}_i}$ be the associated projective space invariant element of the module $M_i$. Let $a,b\in \mcl{O}$ with $\langle a \rangle +\langle b \rangle=\mcl{O}$ and  \equ{[a:b]\in \mbb{PF}^1_{\us{i=1}{\os{l}{\prod}}\mcl{I}_i}} be such that $[a:b]=[a_i:b_i]\in \mbb{PF}^1_{\mcl{I}_i},1\leq i\leq l$. The following assertions hold true.
\begin{enumerate}
\item The module $\us{i=1}{\os{r}{\bigcap}} M_i$ is a co-torsion module of rank two in $\mcl{O}^2$. 
\item The invariant factor ideals associated to $\us{i=1}{\os{r}{\bigcap}} M_i$ is $\us{i=1}{\os{l}{\prod}}\mcl{L}_i=\mcl{L}\supseteq \mcl{K}=\us{i=1}{\os{l}{\prod}}\mcl{K}_i$
\item The projective space invariant element associated to $\us{i=1}{\os{r}{\bigcap}} M_i$ is $[a:b]\in \mbb{PF}^1_{\us{i=1}{\os{l}{\prod}}\mcl{I}_i}$.
\item For any element $t\in \mcl{L}\bs\big(\us{\mcl{M}\in \mcl{F}}{\cup}\mcl{L}\mcl{M}\big)$ we have $(ta,tb)\in \us{i=1}{\os{r}{\bigcap}} M_i$.
\end{enumerate}
\end{theorem}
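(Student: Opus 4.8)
The strategy is to localize at every maximal ideal of $\mcl{O}$, compute the localizations of $N := \bigcap_{i=1}^{l} M_i$, and reassemble; the mechanism that makes this work is that co-maximality of the $\mcl{K}_i$ forces the supports of the torsion modules $\mcl{O}^2/M_i$ to be pairwise disjoint. Indeed, the elementary maximal ideal divisor invariants of $M_i$ are precisely the maximal ideals containing $\mcl{K}_i$ (Definition~\ref{defn:Invariants}), and if $\mcl{M} \supseteq \mcl{K}_i,\mcl{K}_j$ with $i \neq j$ then $\mcl{M} \supseteq \mcl{K}_i + \mcl{K}_j = \mcl{O}$, which is impossible; so every maximal ideal contains at most one $\mcl{K}_i$, and by hypothesis any such $\mcl{M}$ lies in $\mcl{F}$. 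Note also that $\mcl{L}_i + \mcl{L}_j \supseteq \mcl{K}_i + \mcl{K}_j = \mcl{O}$ and $\mcl{I}_i + \mcl{I}_j \supseteq \mcl{K}_i + \mcl{K}_j = \mcl{O}$, so the families $\{\mcl{L}_i\}$ and $\{\mcl{I}_i\}$ are also pairwise co-maximal. Since localization commutes with finite intersections of submodules, $N_{\mcl{M}} = \bigcap_{i=1}^{l}(M_i)_{\mcl{M}}$; localizing the isomorphism $\mcl{O}^2/M_i \cong \mcl{O}/\mcl{L}_i \oplus \mcl{O}/\mcl{K}_i$ gives $(M_i)_{\mcl{M}} = \mcl{O}^2_{\mcl{M}}$ whenever $\mcl{M} \not\supseteq \mcl{K}_i$ (as in the proof of Proposition~\ref{prop:LocalIntersectionModule}). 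Hence $N_{\mcl{M}} = \mcl{O}^2_{\mcl{M}}$ if $\mcl{M}$ contains no $\mcl{K}_i$, and $N_{\mcl{M}} = (M_{i_0})_{\mcl{M}}$ if $\mcl{M} \supseteq \mcl{K}_{i_0}$ (with $i_0$ then unique).

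For assertions (1) and (2): $N \subseteq M_1 \subseteq \mcl{O}^2$, and $N_{\mcl{M}} = \mcl{O}^2_{\mcl{M}}$ away from the finitely many primes dividing $\mcl{K} := \prod_i \mcl{K}_i$, so $\mcl{O}^2/N$ is torsion and nonzero (as $\mcl{K} \subsetneq \mcl{O}$); thus $N$ is a rank two co-torsion module in $\mcl{O}^2$, which is (1). The natural map $\mcl{O}^2/N \to \bigoplus_{i=1}^{l}\mcl{O}^2/M_i$ induced by the surjections $\mcl{O}^2/N \to \mcl{O}^2/M_i$ is injective since $\bigcap_i M_i = N$, and it is an isomorphism because it is one after localizing at each maximal ideal, by the computation above together with the disjointness of supports. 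So $\mcl{O}^2/N \cong \bigoplus_{i=1}^{l}(\mcl{O}/\mcl{L}_i \oplus \mcl{O}/\mcl{K}_i)$; regrouping the summands and applying the Chinese remainder theorem to the co-maximal families $\{\mcl{L}_i\}$ and $\{\mcl{K}_i\}$ yields $\mcl{O}^2/N \cong \mcl{O}/\mcl{L} \oplus \mcl{O}/\mcl{K}$ with $\mcl{L} := \prod_i \mcl{L}_i \supseteq \mcl{K}$ and $\mcl{K} = \mcl{L}\mcl{I}$ for $\mcl{I} := \prod_i \mcl{I}_i$. By uniqueness of the invariant factor ideals, this proves (2).

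For (4) it suffices to show $(ta,tb) \in N_{\mcl{M}}$ for every maximal ideal $\mcl{M}$, since $N = \bigcap_{\mcl{M}} N_{\mcl{M}}$. If $\mcl{M} \not\supseteq \mcl{K}$ this is automatic, as $N_{\mcl{M}} = \mcl{O}^2_{\mcl{M}}$. If $\mcl{M} \supseteq \mcl{K}_{i_0}$ then $\mcl{M} \in \mcl{F}$, so $t \in \mcl{L} \setminus \mcl{L}\mcl{M}$; since the factors $\mcl{L}_j$, $j \neq i_0$, localize to the unit ideal we get $\mcl{L}_{\mcl{M}} = (\mcl{L}_{i_0})_{\mcl{M}} = \mcl{M}_{\mcl{M}}^{l_{i_0}}$, and $t$ generates this ideal in the discrete valuation ring $\mcl{O}_{\mcl{M}}$. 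The module $N_{\mcl{M}} = (M_{i_0})_{\mcl{M}}$ has, after Proposition~\ref{prop:ProjSpaceBijection}, projective space invariant element the localization of $[a_{i_0}:b_{i_0}]$; since $[a:b] = [a_{i_0}:b_{i_0}]$ in $\mbb{PF}^1_{\mcl{I}_{i_0}}$, it reduces to the same local class, so Theorem~\ref{theorem:ProjInvariantPID} applied over $\mcl{O}_{\mcl{M}}$ (with $\langle a\rangle + \langle b\rangle = \mcl{O}$ giving coprime images) gives $(ta,tb) \in (M_{i_0})_{\mcl{M}}$; the subcase $\mcl{M} \nmid \mcl{I}_{i_0}$, where the local exponents of $\mcl{L}_{i_0}$ and $\mcl{K}_{i_0}$ coincide, is immediate since then $(M_{i_0})_{\mcl{M}} = \mcl{M}_{\mcl{M}}^{l_{i_0}} \oplus \mcl{M}_{\mcl{M}}^{l_{i_0}} \ni (ta,tb)$. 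Hence (4), and then (3) follows at once: by (2) the projective space invariant element of $N$ lies in $\mbb{PF}^1_{\mcl{I}}$, and by Definition~\ref{defn:Invariants}(4) together with the uniqueness in Proposition~\ref{prop:ProjSpaceElement} it is the unique class admitting some $t \in \mcl{L}\setminus(\bigcup_{\mcl{M}\in\mcl{F}}\mcl{L}\mcl{M})$ with $(ta,tb) \in N$, which by (4) is $[a:b]$. I expect the main obstacle to be the bookkeeping in (4): correctly distinguishing the maximal ideals dividing $\mcl{I}_{i_0}$ from those only dividing $\mcl{K}_{i_0}$, and verifying that a class $[a:b]$ compatible with all the $[a_i:b_i]$ under the Chinese remainder reduction map does localize to each $[a_{i_0}:b_{i_0}]$ at every relevant prime; the remainder is routine localization and gluing.
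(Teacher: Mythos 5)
Your proof is correct and follows essentially the same route as the paper's (which is only a sketch): the injection $\mcl{O}^2/\bigcap_i M_i \hookrightarrow \bigoplus_i \mcl{O}^2/M_i$ shown to be an isomorphism by localization, with the disjointness of supports coming from co-maximality of the $\mcl{K}_i$, and the remaining assertions checked prime-by-prime via Theorem~\ref{theorem:ProjInvariantPID} over the local rings. You have simply supplied the details the paper leaves as ``straightforward.''
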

\begin{proof}
We note that $\frac{\mcl{O}^2}{\us{i=1}{\os{r}{\bigcap}} M_i}$ is annihilated by $\mcl{K}$ hence a torsion module. Now we immediately have a injection $\frac{\mcl{O}^2}{\us{i=1}{\os{r}{\bigcap}} M_i} \hookrightarrow \us{i=1}{\os{l}{\bigoplus}} \frac{\mcl{O}^2}{M_i}\cong \frac{\mcl{O}}{\mcl{L}}\oplus \frac{\mcl{O}}{\mcl{K}}$.
This injective map is an isomorphism which is proved by localization.
The rest of the proof is straight forward and the arguments are similar to the proof of main Theorem~\ref{theorem:ProjectiveInvariant}.	
\end{proof}

\end{document}